\newcommand{\setbuilder}[2]{\left\{#1\ \colon #2\right\}}
\newtheorem{problem}{Problem}
\newtheorem{lemma}{Lemma}
\newtheorem{theorem}{Theorem}
\newtheorem{proposition}{Proposition}
\theoremstyle{definition}
\title{On some non-rigid unit distance patterns}
\author{Nóra Frankl\thanks{Alfréd Rényi Institute of Mathematics, Budapest, Email: {\tt nfrankl@renyi.hu} 
} \and Dora Woodruff\thanks{Harvard University, Cambridge, MA, Email: {\tt dorawoodruff@college.harvard.edu} 
} }
\date{}
\begin{document}

\maketitle

\begin{abstract} A recent generalization of the Erdős Unit Distance Problem, proposed by Palsson, Senger and Sheffer, asks for the maximum number of unit distance paths with a given number of vertices in the plane and in $3$-space. Studying a variant of this question, we prove sharp bounds on the number of unit distance paths and cycles on the sphere of radius $1/\sqrt{2}$. We also consider a similar problem about $3$-regular unit distance graphs in $\mathbb{R}^3$.
\end{abstract}

\section{Introduction}
\par The Erd\H{o}s Unit Distance Problem is one of the most famous unsolved problems in discrete geometry. It asks for $u_2(n)$, the maximum possible number of unit distances among $n$ points in the plane. The best known lower bound, $u_2(n)=\Omega(n^{1+c/\log \log n})$ for some constant $c$ is due to Erd\H{o}s \cite{erdos1946sets}, and the current best upper bound $u_2(n)=O(n^{\frac{4}{3}})$ is due to Spencer, Szemerédi and Trotter\cite{4/3}. The analogous problem is also interesting, and still far from a resolution, in $\mathbb{R}^3$ and on spheres of most radii. We note that however, starting from dimension $4$, up to the order of magnitude, the question is less interesting, as one can find $\Omega(n^2)$ many unit distances by a well known construction of Lentz \cite{Lenz}. The exact value for large $n$ and even $d\geq 4$ was determined by Brass \cite{Brass} and Swanepoel \cite{Swanepoel}.

Several variants and generalizations of the unit distance problem have been studied. Recently, Sheffer, Palsson and Senger \cite{discretechains} proposed to find the maximum number of unit distance paths $P_k(n)$ with $k$ vertices in $\mathbb{R}^2$ or $\mathbb{R}^3$.  This problem was essentially solved in $\mathbb{R}^2$ in \cite{almostsharp} by finding almost sharp bounds for $k= 0,1 \mod 3$ and showing that the for $k= 2 \mod 3$ the problem is essentially equivalent to the Unit Distance problem. Similar questions have been studied for paths and trees determined by dot products by Kilmer, Marshall, Senger \cite{KMS} and Gunter, Palsson, Rhodes, Senger \cite{GPRS}. Passant \cite{Passant} obtained results for the corresponding distinct distances problem.

\par We continue this line of research. First, we study the unit distance path problem on the sphere $\mathbb{S}^2$ of radius $\frac{1}{\sqrt{2}}$. Stereographic projection from the center of the sphere shows that unit distance graphs on a sphere of this radius are very similar to point-line incidence graphs in the plane. The only difference comes from the fact that when we project a line to a great circle on the sphere, we can choose any of its poles to represent the line. Thus, while the maximum number of edges in the two graphs are in within a constant factor, $K_{2,t}$ subgraphs with $t\geq 3$ are not excluded from unit distance graphs on the sphere. In other words, instead of unit distance graphs on the sphere, we could think about point-line incidence graphs in the plane, but allowing every line to be used twice.

For any fixed $k$, we determine $P^S_k(n)$, the maximum number of unit distance paths with $k$ vertices on the sphere $\mathbb{S}^2$, up to a polylogarithmic factor. We will use notations $\tilde{\Theta }$ and $\tilde{O}$ to hide a poly-logarithmic error term.

\begin{theorem}\label{thm path}
For any fixed $k\geq 1$,  the number of paths on $k$ vertices on the sphere is
\[
P^S_k(n)= 
\begin{cases}
\tilde{\Theta}\left(n^{\lfloor 2(k+3)/5 \rfloor}\right),
& \mbox{if } k = 0,1,3,4 \textrm{ } ({\rm mod\ } 5),\\[4pt]
\tilde{\Theta}\left (n^{\lfloor 2(k+3)/5 \rfloor-2/3}\right),
& \mbox{if } k = 2\textrm { } ({\rm mod\ } 5).
\end{cases}\]
\end{theorem}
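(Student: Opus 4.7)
The plan is to transfer the problem to an incidence-graph setting and perform a dyadic-degree analysis, exploiting that two great circles on the sphere of radius $1/\sqrt{2}$ meet in at most two points, which makes the unit-distance graph $K_{2,3}$-free. As the authors describe, stereographic projection turns each of the $n$ sphere-points into both a planar point and a line (its polar great circle), and two sphere-points are at unit distance iff one lies on the other's polar line. Szemerédi--Trotter then yields $N_{\geq D}=O(n^2/D^3+n/D)$ on the number of vertices of degree at least $D$ and $|E|=\tilde O(n^{4/3})$, which are the only external inputs.

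For the upper bound I would proceed by dyadic decomposition, classifying every $k$-path by the dyadic degree sequence $(D_1,\ldots,D_k)$ of its vertices. Within one such class, the number of paths is bounded by $N_{\approx D_{i_0}}\cdot\prod_i (\text{local continuation count})$, where $i_0$ is an optimally chosen anchor vertex; the continuation count out of a vertex of degree $\approx D$ is at most $D$, and $K_{2,3}$-freeness caps the joint count over two consecutive steps much more strongly (only $O(1)$ pairs of vertices share a common neighbour). I would then prove the cornerstone estimate $P_6^S(n)=\tilde O(n^3)$ directly, and inductively $P_{5j+1}^S(n)=\tilde O(n^{2j+1})$ by extending a $5$-block at a time. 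The gain per $5$-block is $\tilde O(n^2)$, which is precisely what one obtains when the optimal degree scale balances to $D\approx n^{2/5}$. The residual residues $k\in\{3,4,5\}\pmod 5$ are handled by appending one of the small base cases $P_3^S,P_4^S,P_5^S$, while $k\equiv 2\pmod 5$ is handled by appending a single edge, which costs only a $\tilde O(n^{4/3})$ factor rather than a full $\tilde O(n^2)$ factor, producing the exponent $\lfloor 2(k+3)/5\rfloor-2/3$.

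For the lower bound I would give an explicit construction based on a Szemerédi--Trotter-extremal configuration of sphere-points coming from a grid. In such a configuration most lines have richness $\Theta(n^{1/3})$, while a controlled set of very rich lines provides the large degree bursts needed every five steps. Concatenating these into paths yields $\tilde\Omega(n^2)$ extensions per $5$-block and thus matches the upper bound for all residues $k \not\equiv 2 \pmod 5$. For $k \equiv 2 \pmod 5$ one instead attaches an extremal edge to a $5j$-block, realizing the $\tilde\Omega(n^{4/3})$ term.

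The main obstacle will be the upper-bound $5$-block lemma: one must use $K_{2,3}$-freeness at exactly the right step within each block of five vertices (saving a factor of $D$ there) while simultaneously preserving the distinctness of path vertices, since a naive walk count would overshoot by polynomial factors. Calibrating the dyadic degree optimisation so that the pattern of ratios $(n^{1/3},n^{2/3},1,n,1)$ across five consecutive vertices emerges from the argument is delicate. The careful matching of lower and upper bounds across all five residues, and in particular the drop at $k\equiv 2\pmod 5$ where the optimisation hits a boundary of the Szemerédi--Trotter regime, will also require bespoke bookkeeping.
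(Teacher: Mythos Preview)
Your proposal rests on a false premise: the unit-distance graph on the sphere of radius $1/\sqrt{2}$ is \emph{not} $K_{2,3}$-free. If $p$ and $q$ are antipodal, their polar great circles coincide, so every point on that common great circle is a joint neighbour of both; thus antipodal pairs can have up to $n$ common neighbours, and the graph contains arbitrarily large $K_{2,t}$'s. The paper states this explicitly in the introduction, and it is exactly what distinguishes the spherical problem from the planar one. Your upper-bound scheme invokes ``$K_{2,3}$-freeness caps the joint count over two consecutive steps'' at the heart of each $5$-block; that step simply fails whenever the two outer vertices of a triple are antipodal, and those are precisely the paths that dominate the count. In fact your degree pattern $(n^{1/3},n^{2/3},1,n,1)$ already encodes a vertex of degree $n$, which on this sphere can only arise from a pole of a great circle containing $\Theta(n)$ points---i.e.\ one half of an antipodal pair---so the very configuration you are optimising over is one where the $K_{2,3}$-free saving does not apply.

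The lower bound has the same blind spot. A Szemer\'edi--Trotter extremal grid lifted to the sphere is essentially antipodal-free, and antipodal-free paths are bounded by the planar quantity $P_k(n)=\tilde O(n^{\lfloor k/3\rfloor+1})$ (this is the paper's Proposition~\ref{prop planar}); you cannot reach the larger exponent $\lfloor 2(k+3)/5\rfloor$ that way. The paper's construction instead plants the two poles $N_i,S_i$ of each great circle $K_i$ and routes paths $p_1\!\to\!N_\ell\!\to\!p_3\!\to\!S_\ell\!\to\!p_5$ with $p_1,p_3,p_5$ chosen freely on $K_\ell$, harvesting $\Omega(n^3)$ five-vertex paths from $O(n)$ points---impossible in any $K_{2,3}$-free graph. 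Correspondingly, the paper's upper-bound argument is an induction whose entire work goes into controlling where antipodal pairs $(p_i,p_{i+2})$ sit along the path; once a pair is antipodal the next vertex is forced, and once it is not one falls back to the planar ``at most two common neighbours'' bound. You would need to rebuild the argument around this antipodal/non-antipodal dichotomy rather than around a global $K_{2,3}$-freeness that does not hold.
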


The $k=2$ case of the theorem above (without the polylogarithmic error), via stereographic projection, is the Szemerédi-Trotter bound for point-line incidences. Note that while the planar quantity, $P_k(n)$ depends on $k$ mod $3$, on the sphere the answer depends on $k$ mod $5$. The constructions for the lower bounds, described in Section $2$, will explain this difference. We also remark that the exponent $2k/5$ is very close the non-tight upper bounds from \cite{discretechains} for the planar case, but this appears to be a coincidence.

Next, we study $C_k^S(n)$, the maximum possible number of unit distance cycles with $k$ vertices determined by a set of $n$ points in the sphere $\mathbb{S}^2$. For most cycle lengths we have almost sharp results, however working with short cycles is more difficult. Again, bounding the number of cycles of length $2k$ on the sphere is equivalent to finding a bound on the number of polygons with $k$ vertices in the plane that can be determined by $n$ points and $n$ lines, such that we are allowed to use every line twice.

\begin{theorem}\label{thm cycles}
We have $C_4^S(n)=\Theta(n^2)$, and for any $k\geq 5$, with the exception of $k = 6, 7, 9$ we have
\[
C^S_k(n)= 
\begin{cases} \tilde{\Theta}\left (n^{\lfloor 2k/5 \rfloor}\right ),
& \mbox{if } k = 0,1,3,4 ({\rm mod\ } 5),\\[4pt]
\tilde{\Theta}\left (n^{\lfloor 2k/5 \rfloor+1/3}\right ),
& \mbox{if } k = 2 ({\rm mod\ } 5).
\end{cases}\]
\end{theorem}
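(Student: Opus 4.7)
The plan is to treat the cases $k=4$ and $k\geq 5$ separately, and in the second case to combine Theorem \ref{thm path} with the key geometric observation that any two non-antipodal points $u,v$ on the sphere of radius $1/\sqrt{2}$ share at most two common neighbours in the unit-distance graph, namely the two points in the direction $\pm\, u\times v$ (two orthogonal vectors in $\mathbb{R}^3$ determine a one-dimensional orthogonal complement, which meets the sphere in exactly two points).

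For $k=4$, the lower bound $\Omega(n^2)$ comes from the $K_{2,t}$-style construction with $n-2$ points on a common great circle plus the two poles of that circle, giving $\binom{n-2}{2}$ four-cycles through the two poles; the matching upper bound is immediate from the common-neighbour observation applied to a pair of diagonally opposite vertices. For the lower bounds with $k\geq 5$, I would close up the path constructions of Theorem \ref{thm path}: those arrange points periodically on a family of great circles and their poles so that each period of length $5$ contributes an $n^2$ factor to the path count, and by identifying the last period with the first each path closes into a cycle at the cost of a single one-dimensional coincidence condition that lowers the exponent by exactly $1$, matching the claimed $\lfloor 2k/5 \rfloor$. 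The extra $n^{1/3}$ factor for $k\equiv 2\pmod 5$ would come from the same Szemerédi--Trotter extremal sub-configuration that already appears in the corresponding path construction.

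For the upper bound, the crude estimate $C^S_k(n) = O(P^S_{k-1}(n))$, obtained by removing a single vertex $v_i$ from a $k$-cycle and using that the endpoints of the remaining $(k-1)$-path determine $v_i$ up to two choices, is already sharp (up to polylogarithmic factors) for $k\equiv 0\pmod 5$. For the other residues, I would split the cycle at two vertices into internally disjoint arcs of lengths $a$ and $b$ with $a+b=k$, and apply Cauchy--Schwarz:
\[
C^S_k(n) \leq c \sum_{u,v} p_a(u,v)\, p_b(u,v) \leq c\,\Bigl(\sum_{u,v} p_a(u,v)^2\Bigr)^{1/2}\Bigl(\sum_{u,v} p_b(u,v)^2\Bigr)^{1/2},
\]
where $p_i(u,v)$ is the number of $i$-vertex paths from $u$ to $v$ and $c$ depends only on $k$. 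Each sum $\sum p_i(u,v)^2$ counts theta-graphs with prescribed endpoints and can be controlled inductively by smaller cycle counts together with Theorem \ref{thm path}; for $k\equiv 2\pmod 5$ one of the arcs is chosen to sit in the Szemerédi--Trotter regime, reproducing the $n^{1/3}$ boost.

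The hardest step, and the reason for the exclusions $k\in\{6,7,9\}$, will be calibrating the decomposition $a+b=k$ so that the Cauchy--Schwarz step is lossless. For the smallest lengths no balanced decomposition into arcs works --- for instance, at $k=6$ the arcs have only three vertices and $p_3(u,v)$ is essentially just the number of unit distances, so all the slack in the inequality is lost --- and closing these residual cases will likely require ingredients beyond Theorem \ref{thm path} alone.
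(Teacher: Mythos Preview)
Your treatment of the lower bounds and of $k=4$ is essentially what the paper does: close up the path constructions by routing the last great circle back to the first, losing exactly one power of $n$, and for $k=4$ use the common-neighbour bound together with the obvious handling of antipodal diagonals. The observation that $C^S_k(n)=O\bigl(P^S_{k-1}(n)\bigr)$ (valid because in any cycle of length $\ge 5$ some pair $v_{i-1},v_{i+1}$ must be non-antipodal) is also correct and, as you note, already sharp when $k\equiv 0\pmod 5$.

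The genuine gap is in the upper bound for the remaining residues. Your Cauchy--Schwarz step produces the quantity $\sum_{u,v} p_a(u,v)^2$, but this does \emph{not} count theta-graphs or $(2a-2)$-cycles: it counts ordered pairs of $a$-vertex paths with the same endpoints, and such pairs may coincide (the diagonal already contributes $P^S_a(n)$) or share internal vertices in arbitrary patterns. You give no mechanism for bounding these degenerate contributions, and in several residue classes the diagonal term $P^S_a(n)$ alone is already too large for the inequality to be lossless, regardless of how $a+b=k$ is calibrated. So the scheme as written does not reduce to ``smaller cycle counts together with Theorem~\ref{thm path}''.

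The paper avoids Cauchy--Schwarz entirely and instead separates cycles by how many antipodal pairs $(p_i,p_{i+2})$ they contain. Cycles with at most one antipodal pair are antipodal-free on a long arc, so Proposition~\ref{prop planar} bounds them by the \emph{planar} path count $P_{k-1}(n)$, which is small enough. Cycles with at least one (for $k\equiv 0,1,3$) or at least two (for $k\equiv 2,4$) antipodal pairs are handled by cutting the cycle at those pairs: an antipodal pair determines one endpoint from the other, so each cut costs nothing, and the pieces are bounded directly by Theorem~\ref{thm path} with a short case analysis on the residues of the piece lengths. The exceptional $k\in\{6,7,9\}$ arise because for these lengths the antipodal-free case is the bottleneck and the planar path bound is not strong enough, not because of any balancing issue in a Cauchy--Schwarz decomposition.
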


For $k=3,6,7,9$ there is a gap between the exponent of the lower and upper bounds. We summarize the (to our knowledge) best bounds for these lengths in Proposition \ref{prop small k} in Section $3$. We note that a related problem about cycles in incidence graphs of points and lines was studied by  de Caen and Székely \cite{Szekely}. They conjectured that the maximum number of $6$-cycles determined by an incidence graph of $n$ points and $m$ lines is $O(mn)$, which was disproved by Klavík, Král and Mach \cite{Kral}.

Next, we turn to a similar question in $\mathbb{R}^3$. We study the maximum number of unit distance subgraphs isomorphic to a given $3$-regular graph $G$.

\begin{theorem}\label{thm 3-reg}Let $G$ be a fixed $3$-regular graph on $k$ vertices. The maximum number of unit distance subragraphs isomorphic to $G$ determined by a set of $n$ points in $\mathbb{R}^3$ is $\tilde{O}\left(n^{k/2}\right)$.
\end{theorem}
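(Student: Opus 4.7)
The plan is to count labelled homomorphisms $\phi\colon V(G)\to P$ realising $G$ as a unit-distance subgraph, and divide by $|\mathrm{Aut}(G)|$. The central geometric input is a \emph{three-sphere lemma}: any three distinct points in $\mathbb{R}^3$ have at most two common unit-distance neighbors, since three unit spheres with distinct centers intersect in at most two points (no common-circle degeneracy arises, as that would force two of the centers to coincide). Combined with the standard bound $\tilde{O}(n^{3/2})$ on the number of unit distances in $\mathbb{R}^3$, this is the only geometric input we need.

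We process the vertices in an order $v_1,\ldots,v_k$ that maximises the backward degree $b_i=|N_G(v_i)\cap\{v_1,\ldots,v_{i-1}\}|$, and bound the number of choices for $\phi(v_i)$ given the previous images: if $b_i=3$ the three-sphere lemma gives at most $2$ choices; if $b_i=2$ the choices lie on a single circle $C_{\phi(u),\phi(v)}$ (the intersection of two unit spheres); if $b_i\le 1$ we use the vertex degree in the unit-distance graph $H$ or the trivial bound $n$. When $G$ is bipartite 3-regular with parts $A,B$ of size $k/2$, placing $\phi|_A$ first uses $n^{k/2}$ choices, and each $b\in B$ has $b_i=3$ and at most $2$ placements, giving $O(n^{k/2})$.

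For general 3-regular $G$, we take a maximum independent set $I\subseteq V(G)$, place $\phi|_I$ first ($n^{|I|}$ choices), and order $V(G)\setminus I$ to maximise backward degree; outside vertices with backward degree $3$ contribute a factor of $2$, while those with backward degree $2$ are constrained to a circle. For these ``circle-vertices'' we apply H\"older's inequality: if there are $r$ of them, the product of the $|P\cap C|$'s summed over $\phi|_I$ is bounded by a power of $\sum_{u\neq v\in P}|P\cap C_{u,v}|^r$, and this quantity equals $O(n^r)$ for $r\ge 3$ by the three-sphere lemma (summing over the unit-neighbor indices gives $|C|(|C|-1)\le 2$ per triple of points).

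The hardest case is 3-regular graphs whose independence number is strictly less than $k/2$, most prominently $K_4$ with $\alpha(K_4)=1$. Here the H\"older step with exponent $r\ge 3$ is not directly available, and the independent-set argument loses a polynomial factor. We bypass this by bounding the count of copies of $G$ in terms of the number of unit equilateral triangles in $\mathbb{R}^3$, which in turn can be bounded by a Cauchy-Schwarz argument using the three-sphere lemma and the unit-distance bound; for $K_4$ this suffices directly, and for the other exceptional small-$\alpha$ cases we combine the triangle count with the H\"older argument above. All polylogarithmic losses in $\tilde{O}$ originate from the polylogarithmic factor in the unit-distance bound in $\mathbb{R}^3$.
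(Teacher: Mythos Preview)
Your approach is genuinely different from the paper's---you try a direct combinatorial ordering plus H\"older, whereas the paper runs the Agarwal--Sharir cutting recursion, encodes the ``containment'' sub-cases by an auxiliary pair $(H,\boldsymbol{\lambda})$, and reduces the exponent to a linear program whose optimum is shown to be at most $k/2$ by an explicit feasible solution. Your argument is clean for bipartite $G$, and even works for graphs like the Petersen graph where $\alpha(G)=k/2-1$. But it does \emph{not} establish the theorem in general.

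Here is the quantitative obstruction. Suppose, optimistically, that after placing a maximum independent set $I$ (with $|I|=\alpha$) you can order $V\setminus I$ so that every vertex has backward degree $\ge 2$; say $r$ have backward degree exactly $2$ and $s$ have backward degree $3$. Since the total number of edges is $3k/2$ and $I$ is independent, we get $2r+3s=3k/2$ and $r+s=k-\alpha$, forcing $r=3(k/2-\alpha)$. Granting your claim $\sum_{p\ne q}|P\cap C_{p,q}|^{r}=O(n^{r})$ for $r\ge 3$, H\"older over $\phi|_{I}$ yields
\[
\sum_{\phi|_I}\ \prod_{j=1}^{r}|C_j|\ \le\ \prod_{j=1}^{r}\Bigl(n^{\alpha-2}\!\!\sum_{p\ne q}|C_{p,q}|^{r}\Bigr)^{1/r}
\ =\ O\bigl(n^{\alpha+r-2}\bigr)\ =\ O\bigl(n^{\,3k/2-2\alpha-2}\bigr),
\]
which is $\le n^{k/2}$ only when $\alpha\ge k/2-1$. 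But there are connected $3$-regular graphs with $\alpha$ arbitrarily far below $k/2-1$: for instance, take $m\ge 3$ copies of $K_4$ minus an edge and glue them cyclically along the missing-edge endpoints. For $m=3$ one checks $\alpha=4<5=k/2-1$, and the gap grows linearly in $m$. For such $G$ your H\"older bound misses the target by a polynomial factor, not merely a polylog.

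Your proposal acknowledges trouble for ``small $\alpha$'' but only treats $K_4$ and says the ``other exceptional cases'' are handled by combining the triangle count with H\"older---this is not a finite list, and the sketch gives no mechanism that closes the polynomial gap above. There are also secondary issues you would have to confront: for many $G$ one cannot avoid backward degree~$1$ (a component of $G[V\setminus I]$ that is a cycle of $\deg_I{=}1$ vertices forces one such vertex), and when a circle-vertex has a backward neighbour outside $I$, the H\"older step as written does not apply because the relevant circle depends on earlier non-$I$ choices. The paper's cutting/LP machinery is precisely what handles all these dependencies uniformly.
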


By slightly modifying the problem, and asking for the maximum number of copies of $G$ with prescribed edge lengths, our upper bound remains valid. In this modified setting, for bipartite graphs we can match this bound by simple constructions.

\subsubsection*{Acknowledgment}This project was done as part of the 2021 New York Discrete Math REU, funded by NSF grant DMS 2051026. NF was partially supported by ERC Advanced Grant "GeoScape". We thank Adam Sheffer and Pablo Soberón for their organization of the REU, as well as all mentors and participants of the program for their support.

\section{Paths on the sphere}\label{sec path}

We begin by recalling the Szemerédi-Trotter bound \cite{SzT} on the number of point line incidences. For the maximum number of incidences $I(n,m)$ between a set of $n$ points and $m$ lines in the plane we have
\begin{equation}\label{lines}
    I(n,m) = \Theta\left (n^{\frac{2}{3}}m^{\frac{2}{3}} + m + n\right ).
\end{equation}

Let $u(m,n)$ denote the maximum number of unit distance pairs between a set of $n$ and a set of $m$ points on the sphere. Via stereographic projection, \eqref{lines} implies  
\begin{equation}\label{SzT}
    u(n,m) = \Theta\left (n^{\frac{2}{3}}m^{\frac{2}{3}} + m + n\right ).
\end{equation}

For any $r \geq 1$, we say that a point $p$ on  $\mathbb{S}^2$ is \emph{$r$-rich} with respect to a set of $n$ points $P\subseteq \mathbb{S}^2$, if it is unit distance apart from at least $r$ points of $P$. A well-known equivalent formulation of \eqref{lines} gives that the maximum number of $r$-rich points with respect to $P$ is
\begin{equation}\label{rich}
    O\left (\frac{n^3}{r^2} + \frac{n}{r}\right ).
\end{equation}

We now list some simple observations, which will be very helpful in this and in the following section. First, on a sphere of radius $\frac{1}{\sqrt{2}}$, two points $p, q$ are unit distance apart if and only if $p$ lies on the great circle that has $q$ as a pole. Furthermore, if $p$ and $q$ are not antipodal, then there are at most $2$ points unit distance from both of them. However, if $p$ and $q$ are antipodal, then any point lying on their great circle will be unit distance from both. The work of Palsson, Senger, and Sheffer in \cite{discretechains} and Frankl and Kupavskii in \cite{almostsharp} for paths in the plane relies on the fact that in the plane, there are at most two points unit distance from two fixed points. Therefore, we will need to find some way to work around the existence of antipodal pairs in our proofs.

We call a path $(p_1,p_2,\dots,p_k)$ on the sphere \emph{antipodal-free} if there is no $1\leq i \leq k-2$ such that $p_i$ and $p_{i+2}$ are antipodal. From the observations above, Theorem 2 in \cite{almostsharp} implies the following statement. 

\begin{proposition}\label{prop planar} For any fixed $k$, the number of antipodal-free $k$-paths $(p_1,p_2,\dots,p_k)$ determined by a set of $n$ points on the sphere is at most $P_k(n)$, the  number of $k$-paths in a set of $n$ points in the plane. That is, the number of antipodal-free $k$-paths is $\tilde{O}(n^{\lfloor k/3\rfloor +1})$ for $k= 0,1 \mod 3$, and $\tilde{O}(n^{ (k+2)/3})$ for $k= 2 \mod 3$.
\end{proposition}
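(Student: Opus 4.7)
The plan is to import the argument for Theorem~2 of \cite{almostsharp} (which establishes the planar bound $P_k(n)$) directly, and verify that it carries over to antipodal-free paths on $\mathbb{S}^2$ without modification. The reason to expect this works is that the proof in \cite{almostsharp} uses only two geometric inputs about the plane: (a) the Szemerédi--Trotter bound on unit distances, and (b) the fact that any two distinct points share at most two common unit-distance neighbors. Input (a) is available on $\mathbb{S}^2$ in identical form via \eqref{SzT}, and input (b), on $\mathbb{S}^2$, takes the form ``two non-antipodal points have at most two common unit-distance neighbors,'' which is exactly the first observation recorded immediately before the proposition statement.

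Concretely, I would first recall how \cite{almostsharp} proves the planar bounds: it counts $k$-paths by iteratively extending shorter paths, decomposing the count according to the ``richness'' of intermediate vertices, applying (a) to bound the number of unit distance edges incident to vertices in prescribed richness bins, and applying (b) whenever a path is extended by two steps from a known endpoint to a known ``target'' endpoint (the middle vertex being one of at most two choices). Second, I would check that each invocation of input (b) takes the form ``given $p_i$ and $p_{i+2}$ in the path, count the choices for $p_{i+1}$ at unit distance from both''—i.e., (b) is always applied to a pair $(p_i, p_{i+2})$. For antipodal-free paths this pair is by hypothesis non-antipodal, so the spherical version of (b) applies and yields the same bound of $2$. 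Running the \cite{almostsharp} argument verbatim, with (a) replaced by \eqref{SzT} and (b) replaced by its spherical non-antipodal analogue, produces the stated upper bounds $\tilde{O}(n^{\lfloor k/3 \rfloor + 1})$ and $\tilde{O}(n^{(k+2)/3})$.

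The main obstacle is purely a bookkeeping one: verifying that no step of the proof in \cite{almostsharp} implicitly uses planarity beyond (a) and (b)—for example, that no auxiliary argument requires two distinct unit circles to intersect in at most two points in a context where the spherical analogue could correspond to a coincident (antipodal) great circle, rather than to a legitimately non-antipodal pair along the path. Since the antipodal-free hypothesis controls exactly the pairs $(p_i, p_{i+2})$, whereas pairs $(p_i, p_j)$ with $|i-j| \geq 3$ are not constrained, I would need to confirm that the proof of \cite{almostsharp} never applies (b) to such a ``long-range'' pair. Given the inductive structure of the argument—each extension step relates consecutive triples along the path—I expect this verification to be straightforward, after which the proposition follows at once.
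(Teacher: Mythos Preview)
Your proposal is correct and matches the paper's approach exactly: the paper does not give a separate proof of this proposition but simply asserts, immediately before its statement, that ``from the observations above, Theorem~2 in \cite{almostsharp} implies the following statement.'' Your plan---check that the two geometric inputs (the Szemer\'edi--Trotter-type bound \eqref{SzT} and the two-common-neighbor property for non-antipodal pairs) suffice to rerun the argument of \cite{almostsharp} verbatim---is precisely the content of that one-line justification, spelled out in more detail than the paper itself provides.
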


\begin{proof}[Proof of Theorem \ref{thm path}] We start by proving the upper bounds. The proof is by induction on $k$. We have to consider several base cases.
\begin{itemize}
\item For $k=1$, we trivially have $P^S_1(n) = n$. 
\item For $k=2$, by \eqref{SzT} we have $P^S_2(n) = O(n^{\frac{4}{3}})$. 
\item For $k=3$, in a path $(p_1,p_2,p_3)$ either $p_1$ and $p_3$ are antipodal, or not. In the first case, after choosing $p_1$, the antipodal pair $p_3$ is uniquely determined, giving the bound $O(n^2)$. In the second case, after choosing $p_1$ and $p_3$, there are at most two choices for the middle vertex $p_2$, and we obtain again the bound $O(n^2)$. Overall we still obtain $P^S_3(n) =O(n^2)$.
\item For $k=4$, any path $(p_1,p_2,p_3,p_4)$ is either antipodal-free, or not. If it is not antipodal-free, we may assume without loss of generality that $p_2$ and $p_4$ are antipodal. Then after choosing $(p_1,p_2,p_3)$, the last vertex $p_4$ is uniquely determined. By the $k=3$ case we have $O(n^2)$ choices for $(p_2,p_3,p_4)$, obtaining the $O(n^2)$ bound.
In the antipodal-free case Proposition \ref{prop planar} implies the bound $\tilde{O}(n^2)$. Adding together the two cases, we obtain the bound $\tilde{O}(n^2)$.

\item For $k=5,6,8$, in any path $(p_1,\dots,p_k)$ either $p_{k-2}$ and $p_k$ are antipodal, or not. In the first case, after choosing $(p_1,\dots,p_{k_1})$ the last vertex $p_k$ is uniquely determined, and we are done by the $k-1$ case. In the second case, after choosing $(p_1,\dots,p_{k-2})$ and $p_k$, there are at most $2$ options for $p_{k-1}$, and we are done by the $k-2$ case.
\item For $k=7$, in any path $(p_1,\dots,p_7)$ at most one of $(p_2,p_4)$ and $(p_4,p_6)$ are antipodal (if both pairs were antipodal, then we would have $p_2 = p_6$, which is forbidden). Without loss of generality, we may assume that $p_4$ and $p_6$ are not antipodal. Then after choosing $(p_1,p_2,p_3,p_4)$ and $(p_6,p_7)$ we have at most two options for $p_5$ and we obtain the bound $\tilde{O}(n^2)O(n^{4/3})=\tilde{O}(n^{10/3})$ bound by the $k=4$ and $k=2$ cases.
\item For $k=9$, in any path $(p_1,\dots,p_9)$ either $p_4$ and $p_6$ are antipodal, or not. If they are not antipodal, then after choosing $(p_1,\dots,p_4)$ and $(p_6,\dots,p_9)$ we have at most $2$ choices for $p_5$, and obtain the $\tilde{O}(n^4)$ bound by the $k=4$ case. If $p_4$ and $p_6$ are antipodal, then $p_6$ and $p_8$ cannot be antipodal. Then after choosing $(p_1,p_2)$, $(p_5,p_6)$ and $(p_8,p_9)$, the vertex $p_4$ is uniquely determined, and we have at most two choices for $p_3$ and $p_7$. Thus, we obtain the bound $O(n^{4/3})O(n^{4/3})O(n^{4/3})=O(n^{4})$ bound by the $k=2$ case.
\end{itemize}

For the induction step, we notice that in the bounds we want to prove
\begin{equation}\label{k-5}
\textrm{the difference between the exponent of } P^S_k(n) \textrm{ and } P^S_{k-5}(n) \textrm{ is } 2 \textrm{ for any } k\geq 6
\end{equation}
and
\begin{equation}\label{k-8}
\textrm{the difference between the exponent of } P^S_k(n) \textrm{ and } P^S_{k-8}(n) \textrm{ is at least } 3 \textrm{ for any } k\geq 9.
\end{equation}

In any path $(p_1,\dots,p_k)$ either one of the pairs $(p_4,p_6), (p_{k-3}, p_{k-5})$ are antipodal, or none of them are antipodal. We bound the number of each of these type of paths separately.

\par If $p_4$ and $p_6$ are not antipodal,  then by the $k=4$ case there are $\tilde{O}(n^2)$ different ways to choose $(p_1,p_2,p_3,p_4)$. Further, by definition there are $P^S_{k-5}(n)$ ways to choose $(p_6,\dots,p_k)$. Since $p_4$ and $p_6$ are not antipodal, after choosing the first $4$ and the last $k-5$ vertices, there are at most $2$ different ways to extend it to a path.
This gives the bound 
\begin{equation}\label{first}
\tilde{O}(n^2)P^S_{k-5}(n)
\end{equation}for the number of paths of this type. So, by observation \eqref{k-5} about the exponent of $P^S_{k-5}(n)$ and by induction, in this case we are done.
Symmetrically, if the $p_{k-3}$ and $p_{k-5}$ are antipodal, we obtain again the bound 
\begin{equation}\label{second}
\tilde{O}(n^2)P^S_{k-5}(n).
\end{equation}

\par If both $p_4,p_6$ and $p_{k-3},p_{k-5}$ are antipodal and $k\geq 10$ then after choosing $(p_5,p_6,\dots,p_{k-4})$ the vertices $p_4$ and $p_{k-3}$ are uniquely determined. Further, both $(p_1,p_2)$ and $(p_{k-1},p_k)$ can be chosen in $O(n^{\frac{4}{3}})$ different ways. Since $(p_2,p_4)$ and $(p_{k-3},p_{k-1})$ cannot be antipodal, there are at most two different choices of $p_3$ through which $p_2$ and $p_4$ can be connected, and at most two different choices of $p_{k-2}$ through which $p_{k-3}$ and $p_{k-1}$ can be connected. Together, these imply that the maximum number of paths of this type is 
\begin{equation}\label{third}
O(n^{\frac{4}{3}})P^S_{k-8}(n)O(n^{\frac{4}{3}})=O(n^3)P^S_{k-8}(n).
\end{equation}

From \eqref{first}-\eqref{third} we obtain that the maximum number of $k$-paths is bounded by
\begin{equation*}
\tilde{O}(n^2)P^S_{k-5}(n)+O(n^3)P^S_{k-8}(n).    
\end{equation*}
This, by induction and by observations \eqref{k-5} and \eqref{k-8} about the exponent of $P^S_{k-5}(n)$ and $P^S_{k-8}(n)$ finishes the proof of the upper bound.

\bigskip

We know turn to the lower bound. For the $k= 0,1,3, 4$  $\mod 5$ cases, we imitate the planar constructions from \cite{discretechains}, taking advantage of the antipodal vertices. For an illustration see \mbox{Figure \ref{circles}.} Let $m = \lfloor 5n/2k \rfloor$.
We take $\lceil 2k/5 \rceil$ great circles $K_0,\dots,K_{\lfloor 2k/5-1 \rfloor}$ and on each of them we place a set $Q_i$ of $(m-2)$ points on each such that:
\begin{itemize}
    \item For any $0 \leq i < \lceil 2k/5 \rceil-1$ and for any $p_i\in Q_i$ there is a point $p_{i+1}\in Q_{i+1}$ at unit distance apart from $p_i$.
    \item For any $0 \leq i \leq \lceil 2k/5 \rceil-1$ the set $Q_i$ does not contain any pole of any circle $K_j$.
\end{itemize}

Further, for every $i$ we place two points $N_i$ and $S_i$ in the poles the circle $K_i$. In this construction, we can find $\Omega(n^{\lfloor 2(k+3)/5 \rfloor})$ many $k$-paths $(p_1,p_2,\dots,p_k)$ such that for $i=5\ell+j$ with $1\leq j \leq 5$:
\begin{itemize}
    \item $p_i$ is in $Q_{\ell}$ if $j=1,3,5$
    \item $p_i=N_\ell$ for $j=2$
    \item $p_i=S_{\ell}$ for $j=4$.
\end{itemize}
Indeed, after choosing $p_1$ from $Q_0$ and $p_{5\ell+3}, p_{5\ell+5}$ (for $5\ell+3,5\ell+5\leq k$) from $Q_{\ell}$ arbitrarily for every $\ell$, we can extend the resulting set to a $k$-path.

\medskip

Finally, we explain the modification to obtain the $n^{1/3}$ improvement for $k=2 \mod 5$. We take the construction described previously with the circles $K_i$, points sets $Q_i\in K_i$ and poles $N_i,S_i$ for $k-2$. Take another point set $Q$ of of $m$ points and with $\Omega(m^{4/3})=\Omega(n^{4/3})$ unit distance pairs (this can be done by the same stereographic projection argument discussed in the introduction). Then modify $Q_0$ on $K_5$ such that for any point $q\in Q$ there is a $q_1\in Q_0$ at unit distance apart from $q$. Similarly as before, we can find $\Omega(n^{\lfloor 2(k+3)/5 \rfloor+1/3})$ many $k$-paths $(q_1,q_2,p_1,p_2,\dots,p_{k-2})$ such that for $i=5\ell+j$ with $1\leq j \leq 5$:
\begin{itemize}
    \item $p_i$ is in $Q_{\ell}$ if $j=1,3,5$,
    \item $p_i=N_\ell$ for $j=2$,
    \item $p_i=S_{\ell}$ for $j=4$,
    \item $q_1,q_2\in Q$.
\end{itemize}
Indeed, after choosing a unit distance pair $(q_1,q_2)$ from $Q$, and $p_{5\ell+3}, p_{5\ell+5}$ (for $5\ell+3,5\ell+5\leq k-2$) from $Q_{\ell}$ arbitrarily for every $\ell$, we can extend the resulting set to a $k$-path.

\begin{figure}
    \centering
   {\includegraphics[scale = 0.45]{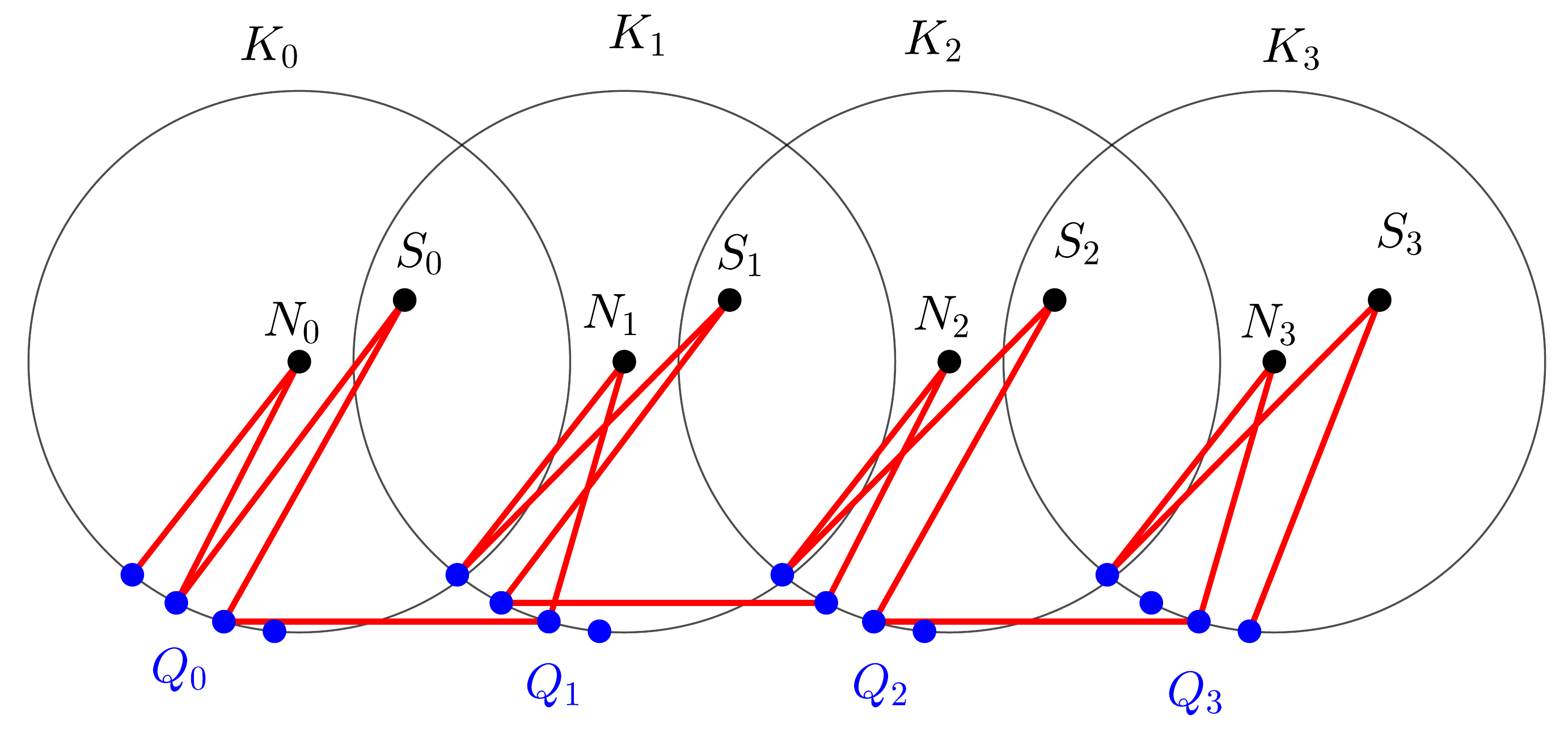}}
    \caption{The circles are the great circles $K_i$ and $N_i$, $S_i$ are their two poles. A possible path starting with a point from $Q_0$ and ending in a point in $Q_3$ is shown in red.}
    \label{circles}
\end{figure}

\end{proof}

\section{Cycles on the sphere of Radius $\frac{1}{\sqrt{2}}$}

To obtain the lower bounds, we slightly modify the path construction as follows. We can arrange the points on the last great circle $K_{\lfloor 2k/5-1 \rfloor}$ such it has a point at unit distance apart from any point of $Q_1$ in the $k= 0,1,3,4$ cases, and from any point of $Q$ in the $k= 2$ case. This will allow closing any $(k-1)$-path $(p_1,p_2,\dots,p_{k-1})$ (in the $k= 0,1,3,4$ cases) or $(q_1,q_2,p_1,p_2,\dots,p_{k-3})$ (in the $k= 2$ case) to a $k$-cycle. Notice that in this construction the exponent is one smaller than the corresponding number of paths for each $k$, thus it indeed matches the claimed bound.

\begin{proof}[Proof of upper bound in Theorem \ref{thm cycles}] We begin by proving the upper bounds. First we bound the number of those cycles $(p_1,p_2,\dots,p_k)$ in which there are at most one antipodal pair separated by one other vertex. If $(p_1,\dots,p_k)$ is antipodal-free, then after choosing $(p_1,\dots,p_{k-1})$, we have at most two choices for the last vertex $p_k$. Further, by Proposition \ref{prop planar}, the number of $(k-1)$-paths $(p_1,\dots,p_{k-1})$ is bounded by $2P_{k-1}(n)$.

If there is exactly one antipodal pair, say $p_1$ and $p_3$, then after choosing the $(k-2)$-path $(p_2,p_3,\dots,p_{k-1})$, the antipodal pair $p_1$ of $p_3$ is uniquely determined. Further, we have at most $2$ choices for the last vertex $p_k$. Thus, Proposition \ref{prop planar}, the number of cycles is bounded by $2P_{k-2}(n)$.

Overall, we obtain that the number of such cycles is bounded by $2P_{k-1}(n)+kP_{k-2}(n)\leq 2kP_{k-1}(n)$. This, by Proposition \ref{prop planar} implies the bound 
$\tilde{O}(n^{\lfloor k/3\rfloor +1})$ for $k= 0,1$ $\mod 3$, and $\tilde{O}(n^{\lfloor k/3\rfloor +1/3})$ for $k= 2$ $\mod 3$. These bounds imply directly the desired bounds. Indeed, for $k\geq 21$ it follows from $k/3+1\leq 2(k+3)/5-1$, and for $k\leq 20$ it can be checked (except for $k=3,6,7,9$) by a brief case analysis.

Thus, we only have to bound the number of cycles under the assumption that there are at least $2$ antipodal pairs. The argument depends on the length of the path up to equivalence$\mod 5$.

\medskip

\noindent $\boldsymbol{k = 0,1,3 \mod 5: }$
In this case we only need the assumption that there is at least one pair of antipodal vertices. If $p_1$ and $p_{k-1}$ are antipodal, then after choosing a $(k-2)$-path $(p_3,p_4,\dots,p_k)$, the vertex $p_1$ is uniquely determined. Further, since $p_1$ and $p_3$ cannot be antipodal, there are at most $2$ choices of $p_2$ to extend $(p_1,p_3,p_4,\dots,p_k)$ to a cycle. Thus the number of cycles in this case is at most $2$-times the number of $(k-2)$-paths, which is $\tilde O(n^{\lfloor (2k+1)/5 \rfloor})=\tilde{O}(n^{\lfloor 2k/5 \rfloor})$ by Theorem \ref{thm path}. As we can argue similarly for any other antipodal pair, overall we obtain the bound $k\tilde O(n^{\lfloor 2k/5 \rfloor })$ for the number of cycles of this type.

\medskip

\noindent $\boldsymbol{k= 2 \mod 5: }$ 
Assume that there are two antipodal pairs $(p_1,p_{3})$ and $(p_i,p_{i+2})$ such that $i\notin \{1,3\}$. First, we prove the bound in the case when the $5$-paths $(p_k,p_1,p_2,p_3,p_4)$ and $(p_{i-1},p_i,p_{i+1},p_{i+2},p_{i+3})$ are disjoint and their complements consist of two non-empty paths $(p_5,\dots,p_{i-2})$ and $(p_{i+4},\dots,p_{k-1})$ of lengths $k_1$ and $k_2$ respectively. Since $k_1+k_2= 2 \mod 5$, we may assume without loss of generality that $(k_1,k_2)= (1,1), (2,0)$ or $(3,4)$ $\mod 5$.
\begin{itemize}
    \item If $(k_1,k_2)=(1,1)$ then after choosing $(p_2,p_3,\dots,p_{i-2})$ and $(p_{i+1},p_{i+2},\dots,p_{k-1})$ arbitrarily, the antipodal pair $p_1$ of $p_3$ and $p_{i}$ of $p_{i+2}$ is uniquely determined. Further, there are at most $2$ choices for $p_k$ and $p_{i-1}$. Thus, the total number of cycles of this type, using Theorem \ref{thm path}, is bounded by
    \begin{multline*}P^S_{k_1+3}(n)\cdot P^S_{k_2+3}(n)=\tilde{O}(n^{\lfloor2(k_1+6)/5 \rfloor+\lfloor 2(k_2+6)/5 \rfloor}) \\
    =\tilde{O}(n^{2(k_1+4)/5+2(k_2+4)/5})=\tilde{O}(n^{(2k-4)/5})=\tilde{O}(n^{\lfloor 2k/5 \rfloor+1/3}).
    \end{multline*}
    \item If $(k_1,k_2)=(2,0)$ then after choosing the paths
     $(p_2,p_3,\dots,p_{i+1})$ and $(p_{i+4},\dots,p_{k-1})$
    the antipodal pair $p_1$ of $p_3$ and $p_{i+2}$ of $p_{i}$ is uniquely determined. Further, there are at most $2$ choices for $p_k$ and $p_{i+3}$. Thus, the total number of cycles of this type, using Theorem \ref{thm path}, is bounded by
    \begin{multline*}P^S_{k_1+6}(n)\cdot P^S_{k_2}(n)=\tilde{O}(n^{\lfloor2(k_1+9)/5 \rfloor+\lfloor 2(k_2+3)/5 \rfloor-2/3})= \tilde{O}(n^{\frac{2k+1}{5}-\frac{2}{3}})
    =\tilde{O}(n^{\lfloor 2k/5 \rfloor+1/3}).
    \end{multline*}
    \item If $(k_1,k_2)=(3,4)$ then after choosing the paths
     $(p_2,p_3,\dots,p_{i+1})$ and $(p_{i+4},\dots,p_{k-1})$
    the antipodal pair $p_1$ of $p_3$ and $p_{i+2}$ of $p_{i}$ is uniquely determined. Further, there are at most $2$ choices for $p_k$ and $p_{i+3}$. Thus, the total number of cycles of this type, using Theorem \ref{thm path}, is bounded by
    \begin{multline*}P^S_{k_1+6}(n)\cdot P^S_{k_2}(n)=\tilde{O}(n^{\lfloor2(k_1+9)/5 \rfloor+\lfloor 2(k_2+3)/5 \rfloor})= \tilde{O}(n^{(2k-4)/5})
    =\tilde{O}(n^{\lfloor 2k/5 \rfloor+1/3}).
    \end{multline*}
\end{itemize}

We also have to consider the more "degenerate" cases, when the paths $(p_k,p_1,p_2,p_3,p_4)$ and $(p_{i-1},p_i,p_{i+1},p_{i+2},p_{i+3})$ are either not disjoint or their complement consists only of one path. That is, up to symmetry we have to consider the cases when $i\in \{2,4,5,6\}$. We obtain the bound $\tilde{O}(n^{\lfloor 2(k+3)/5 \rfloor +1/3})$ 
\begin{itemize}
    \item for $i=2$ by choosing the $(k-3)$-path $(p_3,p_4,\dots,p_{k-1})$ first,
    \item for $i=4$ by choosing the $4$-path $(p_2,p_3,p_4,p_5)$ and $(k-8)$-path $(p_8,\dots,p_{k-1})$ first,
    \item for $i=5$ by choosing the $(k-3)$-path $(p_6,p_7,\dots,p_k,p_1,p_2)$ first.
    \item and for $i=6$ by choosing the $6$-path $(p_k,p_1,\dots,p_5)$ and $(k-10)$-path $(p_8,p_9\dots,p_{k-3})$ first. 
\end{itemize}

\noindent $\boldsymbol{k = 4 \mod 5: }$ 
Assume that there are two antipodal pairs $(p_1,p_{3})$ and $(p_i,p_{i+2})$ such that \mbox{$i\notin \{1,3\}$.} We will assume that the $5$-paths $(p_k,p_1,p_2,p_3,p_4)$ and $(p_{i-1},p_i,p_{i+1},p_{i+2},p_{i+3})$ are disjoint, and their complement consists of two non-empty paths $(p_5,\dots,p_{i-2})$ and $(p_{i+4},\dots,p_{k-1})$ of lengths $k_1$ and $k_2$ respectively. The "degenerate" cases can be done similarly as in the $k= 2 \mod 5$ case, thus we omit the details. Since $k_1+k_2= 4 \mod 5$, we may assume without loss of generality that $(k_1,k_2)=(0,4), (1,3)$ or $(2,2) \mod 5$. The analysis for the $(0,4)$ and $(1,3)$ cases are similar to the argument in the $k= 2$ case.

\begin{itemize}
    \item If $(k_1,k_2)=(1,3)$ then after choosing $(p_2,p_3,\dots,p_{i-2})$ and $(p_{i+1},p_{i+2},\dots,p_{k-1})$ arbitrarily, the antipodal pair $p_1$ of $p_3$ and $p_{i}$ of $p_{i+2}$ is uniquely determined. Further, there are at most $2$ choices for $p_k$ and $p_{i-1}$. Thus, the total number of cycles of this type, using Theorem \ref{thm path}, is bounded by
    \begin{multline*}P^S_{k_1+3}(n)\cdot P^S_{k_2+3}(n)=\tilde{O}(n^{\lfloor2(k_1+6)/5 \rfloor+\lfloor 2(k_2+6)/5 \rfloor}) \\
    =\tilde{O}(n^{(2k_1+9)/5+(2k_2+8)/5})=\tilde{O}(n^{(2k-3)/5})=\tilde{O}(n^{\lfloor 2k/5 \rfloor}).
    \end{multline*}
    \item If $(k_1,k_2)=(0,4)$ then after choosing the paths
     $(p_2,p_3,\dots,p_{i+1})$ and $(p_{i+4},\dots,p_{k-1})$
    the antipodal pair $p_1$ of $p_3$ and $p_{i+2}$ of $p_{i}$ is uniquely determined. Further, there are at most $2$ choices for $p_k$ and $p_{i+3}$. Thus, the total number of cycles of this type, using Theorem \ref{thm path}, is bounded by
    \begin{multline*}P^S_{k_1+6}(n)\cdot P^S_{k_2}(n)=\tilde{O}(n^{\lfloor2(k_1+9)/5 \rfloor+\lfloor 2(k_2+3)/5 \rfloor})
    =n^{(2k_1+15)/5+(2k_2+2)/5}=\tilde{O}(n^{\lfloor 2k/5 \rfloor}).
    \end{multline*}
    \item If $(k_1,k_2)=(2,2)$ an argument similar to the one used for the $(k_1,k_2)=(1,3)$ and $(0,4)$ cases is not sufficient, and we need to consider a few other cases depending whether there are other antipodal pairs. (The difficulty of this case comes from the following fact. When we only break up the cycle into two paths, there lengths are in a way that the product of the number of paths of the corresponding length is too large). 
    Assume first that there are no other antipodal pairs. After choosing $(p_2,p_3,\dots,p_{i+1})$ and $(p_{i+4},\dots,p_{k-1})$, the antipodal pair $p_1$ of $p_3$ and $p_{i+2}$ of $p_i$ is uniquely determined. Further, there are at most $2$ choices for $p_k$ and $p_{i+1}$. We bound the number of paths $(p_2,p_3,\dots,p_{i+1})$ and $(p_{i+4},\dots,p_{k-1})$ by Proposition \ref{prop planar}, and obtain the bound
    \[4P_{k_1+2}P_{k_2}=\tilde{O}(n^{\lfloor 2k/5\rfloor +\frac{1}{3}}).\]
    Indeed, for $k\geq 19$ it follows from 
    \[4P_{k_1+2}P_{k_2}= \tilde{O}(n^{(k_1+6)/3+1}n^{k_2/3+1})=\tilde{O}(n^{(k-3)/3+2})\leq \tilde{O}(n^{\lfloor2k/5\rfloor}),\]
    and for $k\leq 15$ it follows by using the exact bounds from Proposition \ref{prop planar}.
    
    Thus, we may assume that without loss of generality there is a third antipodal pair $(p_j,p_{j+1})$ with $2\leq j \leq i-1$. Again, we assume that the $5$-paths $(p_k,p_1,p_2,p_3,p_4)$, $(p_{j-1},p_j,p_{j+1},p_{j+2},p_{j+3})$ and $(p_{i-1},p_i,p_{i+1},p_{i+2},p_{i+3})$ are pairwise disjoint, and their complement consists of non-empty paths $(p_5,\dots,p_{j-2})$, $(p_{j+4},\dots,p_{i-2})$ and $(p_{i+4},\dots,p_{k-1})$, of lengths $\ell_1,\ell_2$ and $k_2$ respectively. Without loss of generality we may assume that $(\ell_1,\ell_2,k_2)=(0,2,2), (1,1,2)$ or $(3,4,2)$. In any of these cases, using the antipodal pairs $(p_1,p_3)$ and $(p_j,p_{j+1})$, we can proceed as in the $(k_1,k_2)=(0,4)$ or $(1,3)$ cases. 
    
    Finally, we note that the degenerate cases, when two antipodal pairs are too close to each other, can be done similarly as in the $k= 2 \mod 5$ case.
\end{itemize}
\end{proof}

The next proposition summarizes the bounds for short cycles.

\begin{proposition}\label{prop small k}
We have
\begin{alignat*}{4}
C_3(n) & = \Omega(n) & \textrm{ and } & \textrm {  }&  \textrm {  }& C_3(n)& = O(n^{4/3}), \\
C_6(n)& = \Omega(n^2 \log \log n) & \textrm { and } & \textrm {  } &  \textrm {  } & C_6(n)& = \tilde{O}(n^{20/9}), \\
C_7(n)& =\Omega(n^{7/3}) & \textrm{ and } & \textrm {  } &  \textrm {  }&  C_7(n)& = \tilde{O}(n^{8/3}),\\
C_9(n)& =\Omega(n^{3}) & \textrm{ and } & \textrm {  } &  \textrm {  } & C_9(n)& = \tilde{O}(n^{10/3}).
\end{alignat*}
\end{proposition}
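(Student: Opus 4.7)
The proposition packages four separate statements, each of which I would tackle with its own argument.

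For $C_3$, the lower bound $\Omega(n)$ is obtained by placing $\lfloor n/3\rfloor$ pairwise-disjoint equilateral unit triangles on the sphere. The upper bound uses that antipodal points on a sphere of radius $\tfrac{1}{\sqrt 2}$ are at distance $\sqrt 2$, so no unit edge has antipodal endpoints and hence each unit edge lies in at most two triangles; combining with \eqref{SzT} gives $C_3(n)\le \tfrac{2}{3}\,u(n,n)=O(n^{4/3})$. For $C_7$ and $C_9$, the lower bounds come directly from the cycle construction described at the beginning of Section 3, specialized to the appropriate residue class mod $5$ (the extra $n^{1/3}$ in the $C_7$ bound comes from the Szemerédi--Trotter insertion used in the $k\equiv 2\pmod 5$ construction).

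The upper bounds for $C_7$ and $C_9$ I would obtain by adapting the antipodal-pair case analysis of the proof of Theorem \ref{thm cycles}. The cycle is split according to the number and locations of antipodal pairs $(p_i,p_{i+2})$: each such pair forces $p_{i+2}$ from $p_i$ and allows only two extensions for the vertex between them, so the remaining vertices form shorter paths to be counted by Theorem \ref{thm path}; in the antipodal-free case one removes any vertex and applies Proposition \ref{prop planar} to the resulting $(k-1)$-path. Summing over the antipodal-pair configurations yields $\tilde O(n^{8/3})$ for $C_7$ and $\tilde O(n^{10/3})$ for $C_9$. The reason the gap to the lower bounds persists is that the inductive step of Theorem \ref{thm cycles} is only efficient once $k$ is large enough for the $P^S_{k-5}$ and $P^S_{k-8}$ terms to be dominated by the target exponent, which fails for the lengths considered here.

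For $C_6$, the lower bound $\Omega(n^2\log\log n)$ I would obtain by pulling back the Klavík--Král--Mach \cite{Kral} construction of many $6$-cycles in a point--line incidence graph via the stereographic projection from the center of the sphere; the pullback costs only constants, since each line corresponds to a great circle whose two poles play interchangeable roles in the sphere picture. The upper bound $\tilde O(n^{20/9})$ is the main obstacle of the whole proposition. Again I would partition cycles by whether they contain a diagonal antipodal pair: antipodal cases reduce to shorter paths counted by Theorem \ref{thm path}, while the antipodal-free cases push forward via stereographic projection to genuine $6$-cycles in a point--line incidence graph in the plane. For the latter, the exponent $20/9$ does not follow from the tools developed in Sections 2--3 and instead requires an independent incidence-graph bound for $6$-cycles (a dyadic Cauchy--Schwarz refinement of Szemerédi--Trotter), which I expect to be the most delicate ingredient.
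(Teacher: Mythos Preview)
Your treatment of $C_3$ and all four lower bounds matches the paper's approach. For $C_9$ your upper-bound argument is actually correct and slightly simpler than what the paper sketches: an antipodal-free $9$-cycle yields an antipodal-free $8$-path after deleting one vertex, and Proposition~\ref{prop planar} with $k=8\equiv 2\pmod 3$ already gives $\tilde O(n^{10/3})$; the antipodal case reduces to $P^S_7(n)=\tilde O(n^{10/3})$ exactly as you say.

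There is, however, a genuine gap in your argument for the $C_7$ upper bound. In the antipodal-free case you propose to delete one vertex and apply Proposition~\ref{prop planar} to the resulting $6$-path. But $6\equiv 0\pmod 3$, so Proposition~\ref{prop planar} only gives $\tilde O(n^{\lfloor 6/3\rfloor+1})=\tilde O(n^{3})$, not the claimed $\tilde O(n^{8/3})$. No choice of which vertex (or pair of vertices) to delete repairs this: the planar path bound is simply too weak at length $6$ or $5$ to reach exponent $8/3$. The paper handles this case (and the analogous antipodal-free case for $k=6$) by a dyadic decomposition over richness classes: one stratifies the vertices by their unit-distance degree, bounds the number of $n^{\alpha}$-rich points via \eqref{rich}, and then counts cycles by first fixing a suitable subset of vertices in prescribed richness ranges and using \eqref{SzT} to control the remaining edges. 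This is precisely the mechanism that produces the exponents $8/3$ and $20/9$.

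For $C_6$ you correctly identify that the antipodal-free case is the crux and that it demands an incidence-graph argument beyond the tools of Sections~\ref{sec path}--3, but ``a dyadic Cauchy--Schwarz refinement of Szemer\'edi--Trotter'' is not yet a proof. The paper carries this out explicitly: it introduces nested richness sets $Q_i(\alpha_i)$ and, for two of the six vertices, second-neighbourhood sets $Q_1(\alpha_2,\alpha_3)$, $Q_6(\alpha_4,\alpha_5)$; the $20/9$ exponent arises from optimizing the product $u(n^{x_3},n^{x_4})\,u(n^{\alpha_2+\alpha_3},n^{\alpha_4+\alpha_5})$ across a case analysis on which term of \eqref{SzT} dominates. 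The use of \emph{second} neighbourhoods (rather than single-step richness) is what pushes the exponent below $7/3$, and is the ingredient your sketch is missing.
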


It would be interesting to find sharp bounds for short cycles too. The lower bound $C_6(n)=\Omega(n^2\log \log n)$, via stereographic projection, is an immediate corollary of a construction by Klávik, Král and Mach \cite{Kral} giving $\Omega(n^2\log n \log n)$ $6$-cycles in an incidence graph of $n$ points and $n$ lines. This disproved a conjecture of de Caen and Székely \cite{Szekely} that the maximum number of $6$-cycles is $O(n^2)$ in the point-line incidence graph. It is not hard to see that the number of those $6$-cycles that contain an antipodal pair in the unit distance graph on the sphere is $O(n^2)$. Thus, up to the order of magnitude, the problem of bounding the number of $6$-cycles in point line incidence graphs and in unit distance graphs on the sphere, are equivalent.

There is a similar explanation why the $C_7$ and $C_9$ cases are more difficult than the longer cycles: It is not hard to prove upper bounds matching the lower bounds for the number of those cycles, in which there are is least one antipodal pair. Thus, again, for $C_7$ and $C_9$ the most difficult types of cycles to bound are those in which there are no antipodal pairs. For longer cycles, there is no similar issue. This is because no antipodal pair means we can use the bound from Proposition \ref{prop planar} for long sub-paths, and even with a wasteful estimate we will obtain sufficiently good bounds.

\begin{proof}[Proof of Proposition \ref{prop small k}] We start with the lower bounds. For $k=3$ finding linear lower bounds is trivial, and for $k=6$ it follows from \cite{Kral} combined with stereographic projection. For $k=7,9$ we can use the same constructions as for the $k\geq 10$ case. We now turn to the lower bound.

\noindent $\boldsymbol{k=3:}$ The upper bound follows from the Szemerédi-Trotter bound, since every edge can be extended in at most two different ways to a triangle.

For the other cases we use a nested dyadic decomposition argument, similar to the one used by \cite{almostsharp}. The proof for the $k=6,7,9$ cases are all similar, but the $k=6$ cases is somewhat harder. Thus we only spell out the proof for the $k=6$ case and omit the details for $k=7,9$.

Recall that by \eqref{rich} that for any $0\leq \alpha \leq 1$ the maximum number of $n^{\alpha}$-rich points is \mbox{$O(\max \{n^{3-2\alpha},n^{1-\alpha}\})$.} We call a point \emph{usual} if it is $n^{\alpha}$-rich for some $0\leq \alpha \leq 1/2$ and \emph{very rich} if it is $n^{\alpha}$-rich for some $\frac{1}{2}\leq \alpha \leq 1$.

\noindent $\boldsymbol{k=6:}$ Bounding the number of those $6$-cycles in which there are antipodal pairs, can be done similarly as for the $k\geq 10$ cases. Thus, we may assume that there are no antipodal pairs.
First, we bound the number of those $6$-cycles in which there are no four consecutive usual points. In any such cycle, we can find two disjoint edges, separated by another vertex in each direction, that both have a very rich endpoint. Assume that they are $n^{\alpha_1}$ and $n^{\alpha_2}$-rich respectively. Since there are no antipodal pairs in the cycle, after choosing the two disjoint edges, there are at most $4$ different ways to extend it to a $6$-cycle. Thus, by using dyadic decomposition and \eqref{rich}, we obtain the bound
\[\sum_{(\alpha_1,\alpha_2\in \Lambda)}O(n^{1-\alpha_1}n^{\alpha_1}n^{1-\alpha_2}n^{\alpha_2})=\tilde{O}(n)^2,\]
where $\Lambda=\setbuilder{(i,j)}{i,j\in \{\lfloor \frac{\log_2 n}{2}\rfloor,\lfloor \frac{\log_2 n}{2}\rfloor+1,\dots, \lfloor\log_2 n \rfloor\}}$.

Next, we bound the number of those $6$-cycles $(p_1,p_2,p_3,p_4,p_5,p_6)$, in which there are four consecutive usual points, say $(p_2,p_3,p_4,p_5)$.

For some $0\leq \alpha_2,\alpha_3,\alpha_4,\alpha_5 \leq \frac{1}{2}$ let $Q_2(\alpha_2)$ be the set of those points that are at least $n^{\alpha_2}$-rich and at most $2n^{\alpha_2}$-rich, and let $Q_5(\alpha_5)$ be the set of those points that are at least $n^{\alpha_5}$-rich and at most $2n^{\alpha_5}$-rich. Further, let $Q_3(\alpha_2,\alpha_3)$ be the set of those points that are at least $n^{\alpha_3}$-rich and at most $2n^{\alpha_3}$ with respect to $Q_2(\alpha_3)$, and let $Q_4(\alpha_4)$ be the set of those points that are at least $n^{\alpha_4}$-rich and at most $2n^{\alpha_4}$-rich with respect to $Q_5(\alpha_5)$.
Finally, let $Q_1(\alpha_2,\alpha_3)$ be the union of the second neighborhoods of points in $Q_3(\alpha_3)$, and $Q_6(\alpha_4,\alpha_5)$ be the union of the second neighborhoods of the points in $Q_4(\alpha_4)$.

Using dyadic decomposition, it is sufficient to show that for any fixed $0\leq \alpha_2,\alpha_3,\alpha_4,\alpha_5 \leq \frac{1}{2}$ the number of those $6$-cycles $(p_1,\dots,p_6)$ such that $p_i\in Q_i(\alpha_i)$ for $i\in \{2,3,4,5\}$ is $O(n^{20/9})$. Note that for any such cycle $p_1$ must be in $Q_1(\alpha_2,\alpha_3)$ and $p_6$ must be in $Q_6(\alpha_4,\alpha_5)$. In the rest of the proof we will use the notation $Q_i=Q_i(\alpha_i)$ for $i\in \{2,3,4,5\}$, and $Q_6=Q_6(\alpha_4,\alpha_5)$, $Q_1=Q_1(\alpha_2,\alpha_3)$.

Let $0\leq x_1,\dots,x_6 \leq 1$ such that $|Q_i|=n^{x_i}$. First, we bound the number of $6$-cycles in the case when at least two $Q_i$ is of size $O(n^{1/2})$. If there are two cyclically adjacent indices, say $1$ and $2$, such that $Q_1$ and $Q_2$ are of size $O(n^{1/2})$, then we obtain the bound \[4u(n^{x_1},n^{x_6})u(n^{x_3},n^{x_4})=O(n^{2/3}n^{4/3})=O(n^2),\] by picking $(p_1,p_6)$, $(p_3,p_4)$ and extending $(p_2,p_3,p_4,p_6)$ to a $6$ cycle in at most $4$ different ways. If there are two such non-adjacent $i$ and $j$ such that $Q_i$ and $Q_j$ are of size $O(n^{1/2})$, then we will find two disjoint pairs of indices, separated by $1$ index in each direction, say $(1,6)$ and $(3,4)$ such that $u(n^{x_1},n^{x_6})=O(n)$ and $u(n^{x_3},n^{x_4})=O(n)$, and obtain the bound $O(n^2)$ again.

Next, we bound the number of $6$-cycles in the case when at least one $Q_i$, say $Q_1$ is of size $O(n^{2/9})$. In this case, by picking $(p_3,p_4,p_5)$ and $p_1$ first, we can extend it to a $6$-cycle in at most $4$ different ways, and obtain the bound $O(n^2)n^{2/9}=O(n^{20/9})$. 

From now on, we assume that there are at most one $Q_i$ with $|Q_i|=O (n^{\frac{1}{2}})$, and every $Q_i$ is of size $\Omega(n^{2/9})$. We count the $6$-cycles by picking $(p_3,p_4)$ first. Then $p_1$ must be in the second neighborhood of $p_3$, and $p_6$ must be in the second neighborhood of $p_4$. Further, once $(p_1,p_3,p_4,p_6)$ is picked, there are at most $4$ different ways to finish the cycles. With this, we obtain the bound
\begin{equation}\label{eq c6}
u(n^{x_3},n^{x_4})u(4n^{\alpha_2+\alpha_3},4n^{\alpha_4+\alpha_5}).
\end{equation}
We also have
\begin{equation}\label{eq alpha}
n^{\alpha_2}\leq \frac{u(n^{x_2},n)}{n^{x_2}},\textrm{ } \textrm{ } n^{\alpha_3}\leq \frac{u(n^{x_2},n^{x_3})}{n^{x_3}}, \textrm{ } \textrm{ } n^{\alpha_4}\leq \frac{u(n^{x_4},n^{x_5})}{n^{x_4}}, \textrm{ } \textrm{ } n^{\alpha_5}\leq \frac{u(n^{x_5},n)}{n^{x_5}}.
\end{equation}

By \eqref{SzT} we have $u(m,n)=O(m^{2/3}n^{2/3}+n+m)=O(\max\{m^{2/3}n^{2/3},n,m\})$. We will distinguish a few cases based on which term the maximum is taken in \eqref{eq c6} and in the inequalities in \eqref{eq alpha}.

\textbf{Case 1: } Both in \eqref{eq c6} and \eqref{eq alpha} the maximum is taken on the first term everywhere. Then we obtain the bound
\begin{equation*}
u(n^{x_3},n^{x_4})u(4n^{\alpha_2+\alpha_3},4n^{\alpha_4+\alpha_5})= O\left(n^{\frac{2}{3}(x_3+x_4)}n^{\frac{2}{3}(\frac{4}{3}+\frac{1}{3}(x_2+x_5)-\frac{1}{3}(x_3+x_4))}\right )=O(n^{20/9}).
\end{equation*}

\textbf{Case 2: } $u(n^{x_3},n^{x_4})=O(\max\{n^{x_3},n^{x_4}\})$. Without loss of generality we may assume that $u(n^{x_3},n^{x_4})=O(n^{x_4})$. Note that this implies $n^{x_3}=O(n^{1/2})$. Then we may assume that $u(4n^{\alpha_2+\alpha_3},4n^{\alpha_4+\alpha_5})=O(n^{\frac{2}{3}(\alpha_2+\alpha_3+\alpha_4+\alpha_5)})$, otherwise we would obtain the bound $O(n^2)$. Similarly, we may assume that the maximum in the bound for $u(n^{x_2},n), u(n^{x_4,x_5}),u(n^{x_5},n)$ the maximum is taken on the first term, otherwise we would obtain two parts $Q_i$ of size $O(n^{1/2})$.

\begin{itemize}
\item If $u(n^{x_2},n^{x_3})=O(n^{x_3})$, then $n^{x_2}=O(n^{1/2})$, giving a $Q_i$ of size $O(n^{1/2})$. 
\item If $u(n^{x_2},n^{x_3})=O(n^{x_2})$, then we obtain the bound 
\begin{equation*}
u(n^{x_3},n^{x_4})u(4n^{\alpha_2+\alpha_3},4n^{\alpha_4+\alpha_5})= O\left(n^{x_4}n^{\frac{2}{3}(\frac{4}{3}+\frac{2}{3}x_2-x_3-\frac{1}{3}x_4+\frac{1}{3}x_5)}\right )=O(n^{20/9}),  
\end{equation*}
using the assumption that $n^{x_3}=\Omega(n^{2/9})$. 
\item Finally, if $u(n^{x_2},n^{x_3})=u(n^{\frac{2}{3}(x_2+x_3)})$, then we obtain
\[u(n^{x_3},n^{x_4})u(4n^{\alpha_2+\alpha_3},4n^{\alpha_4+\alpha_5})=O\left(n^{x_4}n^{\frac{2}{3}(\frac{4}{3}+\frac{1}{3}(x_2+x_5)-\frac{1}{3}(x_3+x_4))}\right )=O(n^{20/9}).\]
\end{itemize}

\textbf{Case 3: } $u(4n^{\alpha_2+\alpha_3},4n^{\alpha_4+\alpha_5})=O(\max\{n^{\alpha_2+\alpha_3},n^{\alpha_4+\alpha_5}\})$. Without loss of generality we may assume that $u(4n^{\alpha_2+\alpha_3},4n^{\alpha_4+\alpha_5})=O(\max\{n^{\alpha_2+\alpha_3}\})$. This implies $n^{\alpha_4+\alpha_5}=O(n^{1/2})$. Similarly as in Case 2, we may assume that in the bound for $u(n^{x_2},n)$, $u(n^{x_2},n^{x_3})$, $u(n^{x_3},n^{x_4})$ the maximum is taken in the first term. Then regardless on which term the maximum is taken in $u(n^{x_5},n)$, we obtain the bound

\[u(n^{x_3},n^{x_4})u(4n^{\alpha_2+\alpha_3},4n^{\alpha_4+\alpha_5})=O\left(n^{\frac{2}{3}(x_3+x_4)}n^{\frac{2}{3}+\frac{1}{3}x_2-\frac{1}{3}x_3}\right )=O(n^{20/9}).\]

$\boldsymbol{k=7,9:}$ The proof is by using dyadic decomposition in a similar way as in the $k=6$ case. The reason why there is $3$ in the denominator of the exponent instead of $9$ is that we do not consider $Q_1(\alpha_2,\alpha_3)$ type of sets, only $Q_i(\alpha_i)$.

\bigskip

\end{proof}

\section{3-Regular graphs in $\mathbb{R}^3$}

The main goal of this section is to prove Theorem \ref{thm 3-reg}. As it does not affect the answer up to the order of magnitude, we switch to the multipartite version of the problem. For a fixed $3$-regular graph $G$ on $k$ vertices, and for $k$ sets $P_1,\dots,P_k\subseteq \mathbb{R}^3$ we denote by $F(P_1,\dots,P_k)$ the maximum number of $k$-tuples $(p_1,\dots,p_k)$ such that the unit distance graph determined by them is isometric to $G$, and $p_i\in P_i$ for every $i\in [k]$. Further, we use the notation
\[f(n_1,\dots,n_k)=\max|F(P_1,\dots,P_k)|,
\]
and $f(n)=f(n,\dots,n)$.

To prove Theorem \ref{thm 3-reg}, we follow a divide and conquer strategy of Agarwal and Sharir \cite{cuttinglemma}. The strategy uses \emph{cuttings}, a partitioning technique, which was a precursor to the more recent polynomial partitioning method. We say that a sphere $S$ \emph{crosses} a subset $\tau\subseteq \mathbb{R}^n$ if $S \cap \tau \neq \emptyset$, but $\tau \not\subset S$. To follow usual terminology, we will call the subsets in the partition \emph{cells}. (Note that usually a \emph{cell} in this context, means a more specific subset described by a bounded number of polynomials. However, since we only use the cutting results as a black-box in a very specific case, for simplicity, we do not define cells here more properly.)

The following cutting lemma was proved in \cite{cuttinglemma}).

\begin{lemma}[Cutting Lemma from \cite{cuttinglemma}]\label{cutting lemma}
Given a set of points $P$ and $\ell$ sets $Q_0,\dots,Q_{\ell}$ of spheres in $\mathbb{R}^d$, for any $1\leq r \leq n$ we can partition the $\mathbb{R}^d$ into cells such that the following three conditions hold.
\begin{enumerate}
    \item The number of cells is $\tilde{O}(r^d)$.
    \item The number of points in each cell is $O\left (\frac{|P|}{r^d}\right )$.
    \item For every $i\in [\ell]$ each cell is crossed by $O\left (\frac{|Q_i|}{r}\right )$ many spheres from $Q_i$.
\end{enumerate}
Further, if $P$ is contained in a $(d-1)$-sphere $\mathbb{S}^{d-1}$, then we can partition $\mathbb{S}^{d-1}$ into cells such that the following three conditions holds.
\begin{enumerate}
    \item The number of cells is $\tilde{O}(r^{d-1})$.
    \item The number of points in each cell is $O\left (\frac{|P|}{r^{d-1}}\right)$.
    \item For every $i\in [\ell]$ each cell is crossed by $O\left( \frac{|Q_i|}{r}\right)$ many spheres from $Q_i$.
\end{enumerate}
\end{lemma}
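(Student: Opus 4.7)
The plan is to prove this cutting lemma via the Chazelle--Friedman random sampling technique, refined with the Clarkson--Shor exponential decay framework. First, for each family $Q_i$ I would draw an independent random sample $R_i \subseteq Q_i$ of size $\Theta(r \log r)$ and form the arrangement $\mathcal{A}$ of $\bigcup_i R_i$ in $\mathbb{R}^d$. Since spheres are bounded-degree algebraic surfaces and the total number of sampled ones is $\tilde{O}(r)$ (treating $\ell$ as constant, which is the intended regime), $\mathcal{A}$ has combinatorial complexity $\tilde{O}(r^d)$. A vertical decomposition of $\mathcal{A}$ then produces semi-algebraic cells of bounded description complexity while preserving the $\tilde{O}(r^d)$ cell count.

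The crossing bound (condition (3)) comes from an $\varepsilon$-net argument. For each fixed $i$, the range space whose ranges are ``subsets of $Q_i$ crossing a given cell'' has bounded VC-dimension, and $R_i$ serves as a $(c\log r / r)$-net: with high probability every cell of the arrangement is crossed by at most $c|Q_i|/r$ spheres of $Q_i$. A union bound over the $O(\ell)$ families fixes a single realization satisfying all crossing bounds simultaneously.

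To enforce condition (2), I would refine any ``heavy'' cell containing more than $|P|/r^d$ points. The Clarkson--Shor exponential decay lemma guarantees that the number of cells containing between $2^j |P|/r^d$ and $2^{j+1}|P|/r^d$ points drops geometrically in $j$, so each heavy cell can be split into $O(2^j)$ subcells (e.g., via a secondary polynomial partition of degree $O(2^{j/d})$ applied locally) with total new cell count still $\tilde{O}(r^d)$. Crossings can only decrease under refinement, so (3) is maintained. For the spherical variant I would run the same argument intrinsically on $\mathbb{S}^{d-1}$: a sphere in $\mathbb{R}^d$ meets $\mathbb{S}^{d-1}$ in a $(d-2)$-sphere, so the analogous arrangement on $\mathbb{S}^{d-1}$ has complexity $\tilde{O}(r^{d-1})$, and the rest of the argument goes through verbatim.

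The main obstacle I expect is disentangling the two sources of imbalance---the point set $P$ and the sphere families $Q_i$---in the refinement step, and checking that the $O(\log r)$ overhead from the sample size is absorbed into the $\tilde{O}$ notation at every level. Concretely, one must verify that choosing the local subdivision of a heavy cell using $P$ alone does not interact badly with the crossings from the $Q_i$; this follows from monotonicity of crossing counts under refinement combined with the exponential-decay bookkeeping, but the accounting is delicate, especially if one wishes to keep the constants uniform across all $\ell+1$ sphere families.
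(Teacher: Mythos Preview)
The paper does not prove this lemma at all: it is quoted verbatim from Agarwal--Sharir \cite{cuttinglemma} and used as a black box, so there is no ``paper's own proof'' to compare against. Your sketch is the standard random-sampling/$\varepsilon$-net route that Agarwal and Sharir themselves use, so in spirit you are reproducing the cited source rather than diverging from it.

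A couple of technical remarks on the sketch itself. First, your invocation of vertical decomposition with complexity $\tilde{O}(r^d)$ is safe in the regime the present paper actually needs ($d\le 3$, spheres), but in general dimension vertical decomposition of an arrangement of $m$ algebraic surfaces is only known to have near-$m^{2d-4}$ complexity, not $m^d$; if you want the lemma as stated for all $d$ you should instead appeal to the Chazelle--Friedman cutting construction for bounded-degree surfaces, which gives $O(r^d)$ cells directly without passing through a full vertical decomposition. Second, your refinement step for condition~(2) mixes in ``secondary polynomial partition'', which is anachronistic relative to \cite{cuttinglemma} and also unnecessary: the cleaner bookkeeping is to build the cutting hierarchically (a $(1/r)$-cutting as a refinement of a $(1/r')$-cutting for $r'\ll r$) and subdivide each cell by hyperplanes chosen only with respect to $P$, which automatically keeps crossings monotone. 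With those two adjustments your outline is correct.
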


To illustrate the method, we first prove the following simpler proposition, which gives the best upper bound we have been able to prove on the number of $4$-cycles in $\mathbb{R}^3$.

\begin{proposition}\label{prop: 4cycle}
The maximum number of unit $4$-cycles determined by a set of $n$ points in $\mathbb{R}^3$ is $\tilde{O}(n^{12/5})$.
\end{proposition}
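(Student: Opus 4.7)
The plan is to follow the divide-and-conquer strategy of Agarwal and Sharir via Lemma \ref{cutting lemma}. First I would pass to the multipartite version: given $n$-point sets $P_1, P_2, P_3, P_4 \subseteq \mathbb{R}^3$, let $f(n)$ denote the maximum number of $4$-tuples $(p_1, p_2, p_3, p_4) \in P_1 \times P_2 \times P_3 \times P_4$ with $|p_1 p_2| = |p_2 p_3| = |p_3 p_4| = |p_4 p_1| = 1$; by a standard argument it then suffices to show $f(n) = \tilde{O}(n^{12/5})$.

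The main step is to apply Lemma \ref{cutting lemma} in $\mathbb{R}^3$ with a parameter $r$ (to be fixed at the end) to the point set $P_2$ and the family of $2n$ unit spheres $\{S_p : p \in P_1 \cup P_3\}$. This produces $\tilde{O}(r^3)$ cells, each containing $O(n/r^3)$ points of $P_2$ and crossed by $O(n/r)$ spheres from each family. The key geometric observation is that for every unit $C_4$ $(p_1, p_2, p_3, p_4)$ counted by $f(n)$, the vertex $p_2$ lies in some cell $\tau$; since $p_2 \in S_{p_1} \cap S_{p_3} \cap \tau$ and neither sphere contains $\tau$, both $S_{p_1}$ and $S_{p_3}$ must cross $\tau$. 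Writing $P_i^{\tau}$ for the points of $P_i$ whose unit sphere crosses $\tau$, I would then sum over cells and count $4$-tuples with $p_2 \in P_2 \cap \tau$, $p_1 \in P_1^{\tau}$, $p_3 \in P_3^{\tau}$ and $p_4 \in P_4 \cap \gamma_{p_1, p_3}$, where $\gamma_{p_1, p_3} := S_{p_1} \cap S_{p_3}$ is a circle.

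Inside a single cell I would first bound the triples $(p_1, p_2, p_3)$ with $|p_1 p_2| = |p_2 p_3| = 1$ via the bipartite unit-distance bound of order $O((ab)^{3/4})$ in $\mathbb{R}^3$, applied to $P_1^{\tau} \times (P_2 \cap \tau)$ and to $P_3^{\tau} \times (P_2 \cap \tau)$. Then I would bound the extensions to $p_4$ by an aggregate point-circle incidence argument for the family $\{\gamma_{p_1, p_3}\}$, for instance via a second application of Lemma \ref{cutting lemma} (its sphere version) on each sphere $S_{p_2}$, or via a dyadic decomposition over the richness of the circles $\gamma_{p_1, p_3}$. Summing the cell-wise estimates over the $\tilde{O}(r^3)$ cells and balancing $r$ should yield the claimed bound $\tilde{O}(n^{12/5})$.

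The hardest step will be the $p_4$-count: because $|P_4 \cap \gamma_{p_1, p_3}|$ can be as large as $\Omega(n)$ on an individual circle, it cannot be controlled pointwise and the estimate must come from the aggregate incidence structure between $P_4$ and the circles $\gamma_{p_1, p_3}$. Getting this aggregate bound — and balancing it against the cell count $\tilde{O}(r^3)$ and the per-cell triple count so that the exponent $12/5$ actually emerges — is the main technical obstacle, and is where the choice of which objects are fed into the cutting lemma will be most delicate.
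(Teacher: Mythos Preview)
Your plan diverges from the paper's argument in a way that leaves a real gap. The paper does \emph{not} try to finish after a single cutting; instead it applies Lemma~\ref{cutting lemma} four times, once with each $P_i$ in the role of the point set and the spheres coming from its two neighbours in the cycle. After all four rounds every part has been shrunk once by a factor $r^3$ (its own cutting) and twice by a factor $r$ (its two neighbours' cuttings), so each $n$ becomes $n/r^5$, while the total number of subproblems is $\tilde O(r^{3\cdot 4})=\tilde O(r^{12})$. This yields the self-recurrence $c_4(n)\le \tilde O(r^{12})\,c_4(n/r^5)+\tilde O(n^2)$, whose solution is $\tilde O(n^{12/5})$; the exponent $12/5$ is simply $12$ over $5$, and no auxiliary unit-distance or point--circle bounds are needed.

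Your single-cutting plan, by contrast, tries to bound each cell directly using the bipartite unit-distance estimate $O((ab)^{3/4})$ and a point--circle bound for the $p_4$-extension. Two things go wrong. First, the bipartite bound controls the \emph{edge} count between $P_1^\tau$ and $P_2\cap\tau$, not the \emph{path} count $\sum_{p_2}d_1(p_2)d_3(p_2)$, which can be much larger when degrees concentrate; so the triple estimate is not justified. Second, even granting optimistic per-cell bounds, you give no computation showing that the balance comes out to $12/5$, and the acknowledged ``hardest step'' (the aggregate $p_4$ count) is left entirely open. There is also a smaller gap: your assertion that ``neither sphere contains $\tau$'' is unjustified---some cells in the cutting are lower-dimensional and can lie on a unit sphere. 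The paper handles this containment case separately (if $\tau$ lies in three or more unit spheres then $|P_1^\tau|\le 2$, reducing to a $3$-path count of size $\tilde O(n^2)$); this is easy to patch, but the missing recursion is the substantive issue.
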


It would be interesting to find better estimates. The best lower bounds we found is $\Omega(n^2)$, given by the same construction used on the sphere.

\begin{problem} Find the maximum possible number of $4$-cycles determined by a set of $n$ points in $\mathbb{R}^3$.
\end{problem}

\begin{proof}[Proof of Proposition \ref{prop: 4cycle}]
We again switch to the multipartite version of the problem, and for sets $P_1,P_2,P_3,P_4$  we denote by $C_4(P_1,P_2,P_3,P_4)$ the maximum number of unit $4$-cycles $(p_1,p_2,p_3,p_4)$ with $p_i\in P_i$. Further, let $c_4(n_1,n_2,n_3,n_4)=\max|C_4(P_1,P_2,P_3,P_4)|$ where the maximum is taken over all $P_i$ with $|P_i|\leq n_i$, and let $c_4(n)=c_4(n,n,n,n)$. For a point $p$ we also denote by $S(p)$ the unit sphere centered at $p$.
We will bound $C_4(P_1,P_2,P_3,P_4)$ with $|P_i|\leq n$ for all $i\in [4]$.

For some parameter $r$ we partition $\mathbb{R}^3$ into $\tilde{O}(r^3)$ cells as in Lemma \ref{cutting lemma} with $P_1$ as the set of points, and $\setbuilder{S(p_2)}{p_2\in P_2}$ and $\setbuilder{S(p_4)}{p_4\in P_4}$ as the sets of spheres. For a cell $\tau$ let $P_1^{\tau}=P_1\cap \tau$, further, for $i=2,4$ let $Q_i^{\tau}\subseteq \setbuilder{S(p_i)}{p_i\in P_i}$ be the set of those spheres that cross $\tau$, and $R_i^{\tau}\subseteq \setbuilder{S(p_i)}{p_i\in P_i}$ be the set of those spheres that contain $\tau$. Summing over all cells $\tau$ we obtain
\[C_4(P_1,P_2,P_3,P_4)\leq \sum_{\tau} \Big ( C_4(P_1^{\tau},Q_2^{\tau},P_3,Q_4^{\tau})+C_4(P_1^{\tau},R_2^{\tau},P_3,P_4)+C_4(P_1^{\tau},P_2,P_3,R_4^{\tau})\Big).
\]

\begin{proposition}\label{prop contain}$C_4(P_1^{\tau},R_2^{\tau},P_3,P_4)=\tilde{O}(n^2)$ and  $C_4(P_1^{\tau},P_2,P_3,R_4^{\tau})=\tilde{O}(n^2)$.
\end{proposition}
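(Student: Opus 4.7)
The approach is to exploit the fact that for any $p_2 \in R_2^\tau$, every $p_1 \in P_1^\tau \subseteq \tau \subseteq S(p_2)$ already satisfies $|p_1-p_2|=1$, and the cycle edge $|p_2-p_3|=1$ forces $p_3 \in S(p_2)$ as well. Thus in every 4-cycle counted here, both $p_1$ and $p_3$ lie on the single unit sphere $S(p_2)$, and the two remaining edges $|p_1-p_4|=|p_3-p_4|=1$ put $p_4$ on the circle $S(p_1)\cap S(p_3)$.

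The first step is to verify that $|R_2^\tau|=O(1)$. If $\tau$ contains three affinely independent points, any $p_2 \in R_2^\tau$ lies on the intersection of their three perpendicular bisector planes and on a unit sphere around each, which forces $|R_2^\tau|\le 2$. The low-dimensional degenerate strata (cells lying in a circle, a line, or reduced to a single point) contribute only a small number of cells and can be handled by direct counting.

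Now fix a single $p_2 \in R_2^\tau$. The crucial observation is that every circle $S(p_1)\cap S(p_3)$ arising from $p_1,p_3 \in S(p_2)$ passes through $p_2$ itself, since $|p_2-p_1|=|p_2-p_3|=1$ forces $p_2 \in S(p_1)\cap S(p_3)$. Apply an inversion centered at $p_2$: this bijection maps every circle through $p_2$ to a line in $\mathbb{R}^3$, and maps $P_4\setminus\{p_2\}$ to a set of at most $n$ points. Hence the number of 4-cycles through $p_2$ is bounded by the number of point–line incidences between these $n$ image points and the at most $|P_1^\tau|\cdot |P_3\cap S(p_2)|\le n^2$ image lines. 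Each circle $S(p_1)\cap S(p_3)$ in $\mathbb{R}^3$ is determined, up to reflection across its plane, by each of its two centers, so a fixed line is the image of $O(1)$ pairs $(p_1,p_3)$; accounting for this bounded multiplicity and invoking Szemerédi–Trotter in $\mathbb{R}^3$ (valid in any dimension because two distinct lines meet in at most one point) yields
\[
O\bigl((n\cdot n^2)^{2/3}+n^2+n\bigr)=O(n^2).
\]
Multiplying by $|R_2^\tau|=O(1)$ gives $C_4(P_1^\tau,R_2^\tau,P_3,P_4)=\tilde O(n^2)$, and the symmetric estimate on $C_4(P_1^\tau,P_2,P_3,R_4^\tau)$ follows by swapping the roles of $p_2$ and $p_4$.

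The main obstacle is the careful bookkeeping for $|R_2^\tau|$ in the low-dimensional strata of the partition; once that is settled, the inversion trick turns the remaining count into a clean planar-style Szemerédi–Trotter estimate in 3-space.
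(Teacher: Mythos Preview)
Your argument has a gap in the ``degenerate'' case, and the way you describe that case is also off. The proposition is a bound for a \emph{single} cell $\tau$, so the remark that degenerate strata ``contribute only a small number of cells'' is beside the point: you must still show $C_4(P_1^\tau,R_2^\tau,P_3,P_4)=\tilde O(n^2)$ for that particular $\tau$. Moreover, the bound $|R_2^\tau|\le 2$ already holds whenever $\tau$ contains three \emph{distinct} points (collinear or not, since three unit spheres with distinct centres meet in at most two points), so the only genuine failure is when $\tau$---and hence $P_1^\tau$---has at most two points. In that case $|R_2^\tau|$ can be as large as $n$, and your inversion argument, which sums over the $O(1)$ choices of $p_2$, does not apply. ``Direct counting'' is left unspecified.

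The paper's proof is much shorter and sidesteps all of this with a one-line dichotomy: since three distinct points in $\mathbb{R}^3$ have at most two common unit neighbours, either $|R_2^\tau|\le 2$ or $|P_1^\tau|\le 2$. Fixing a vertex from whichever set is small, the remaining count is at most the number of unit $3$-paths among $n$ points in $\mathbb{R}^3$, which is $\tilde O(n^2)$ by the cited result of Frankl and Kupavskii. Your inversion-plus-Szemer\'edi--Trotter argument is correct and self-contained in the branch $|R_2^\tau|\le 2$ (and even shaves the polylog), but to complete the proof you must handle the other branch---either by invoking the $3$-path bound as the paper does, or by rerunning your own inversion centred at the fixed $p_1$, noting that every circle $S(p_2)\cap S(p_4)$ then passes through $p_1$.
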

\begin{proof}
We will only prove $C_4(P_1^{\tau},R_2^{\tau},P_3,P_4)=\tilde{O}(n^2)$, which is sufficient by symmetry.
This means we have to bound the number of $4$-cycles under the condition that all points in $P_1^{\tau}$ are contained in the intersection of unit spheres centered around the points of $R_2^{\tau}$. Since in $\mathbb{R}^3$ for any $3$ points there are at most $2$ other points unit distance apart from each, we either have $|P_1^{\tau}|\leq 2$ or $|R_2^{\tau}|\leq 2$. Thus, $C_4(P_1^{\tau},R_2^{\tau},P_3,P_4)$ is bounded by four times the maximum number of $3$-paths in $\mathbb{R}^3$, which is $\tilde{O}(n^2)$ by \cite{almostsharp}. 
\end{proof}

By the properties of the cutting we also have $C_4(P_1^{\tau},Q_2^{\tau},P_3,Q_4^{\tau})\leq c_4(\frac{n}{r^3},\frac{n}{r},n,\frac{n}{r})$. Thus, we obtain
\[C_4(P_1,P_2,P_3,P_4)\leq \sum_{\tau} \Big ( c_4\left (\frac{n}{r^3},\frac{n}{r},n,\frac{n}{r}\right)+\tilde{O}(n^2)\Big).
\]

Repeating a similar analysis three more times with cyclic shifts (in the next round $P_2$ plays the role of $P_1$, $P_3$ plays the role of $P_2$, $P_4$ play the role of $P_3$, and $P_1$ plays the role of $P_4$, and so on), and using that the number of cells in the cutting is $\tilde{O}(r^3)$, we obtain the recurrence
\[c_4(n) \leq \tilde{O}(r^{12})c_4\left (\frac{n}{r^5}\right ).\]
With an appropriate choice of $r$ this recursion yields $c_4(n)\leq \tilde{O}(n^{12/5})$.
\end{proof}

Notice that in the proof of Proposition \ref{prop: 4cycle} in each round when the Cutting Lemma is applied, for every cell we have to consider two different situations and split into two subproblems. For those spheres that cross the cell, we directly plug in the bound on the number of crossings from the Lemma, and obtain the main term of the recursion. Those spheres that contain the cell are  accounted for in Proposition \ref{prop contain}. Proving Proposition \ref{prop contain} was simple, and we could deal with it by `hand`. However, when we work with a large $3$-regular graph, knowing information only about one local containment situation does not make the problem sufficiently simpler.

Therefore, we will follow further ideas of Agarwal and Sharir that they developed for bounding the number of $k$-simplices in higher dimension. We will sketch these ideas for completeness with incorporating the sufficient changes in the method, adjusting it to our problem. Notice that for $d=0,1$ Lemma \ref{cutting lemma} is trivial. Yet, we will utilize these trivial cases, as they will give a convenient uniform way to handle the containment situations mentioned in the previous paragraph.

\bigskip

\begin{proof}[Proof of Theorem \ref{thm 3-reg}]
We will use the Cutting Lemma in $k$-rounds to derive a recurrence for $f(n)$, following Section $5$ of \cite{cuttinglemma}, with making some suitable changes. We will assume that the vertex set of $G$ is $[k]$.

Let $P_1,\dots, P_k\subseteq \mathbb{R}^3$ be point sets with $|P_i|\leq n_i$ for every $i\in [k]$. We denote by $S(p)$ the unit sphere centered at $p$. Without loss of generality, we may assume that $2,3,4$ are the neighbors of $1$.

In the first round, we use the Cutting Lemma with some parameter $r$, with $P_1$ as the set of points, and $Q_i=\setbuilder{S(p)}{p\in P_i}$ as the sets of spheres for $i=2,3,4$. For a cell $\tau$ let $P_1^{\tau}=P_1\cap \tau$. Further, for $i=2,3,4$ let $Q_i^{\tau}\subseteq Q_i$ be the set of those spheres that cross $\tau$, and $R_i^{\tau}$ be the set of those spheres that contain $\tau$.
Summing over all cells $\tau$ we obtain
\begin{multline}\label{ 1st cutting}F(P_1,\dots,P_k)=\sum_{\tau}\Big (F(P_1^{\tau},Q_2^{\tau},Q_3^{\tau},Q_4^{\tau},P_5,\dots,P_k)+F(P_1^{\tau},R_2^{\tau},P_3,\dots,P_k)\\
+F(P_1^{\tau},P_2,R_3^{\tau},P_4,\dots,P_k) +F(P_1^{\tau},P_2,P_3,R_4^{\tau},P_5\dots,P_k)\Big).
\end{multline}

We will bound the first term in each summand by plugging in the information from the Cutting Lemma. In the proof of Proposition \ref{prop: 4cycle} we bounded the terms similar to the other three terms in Proposition \ref{prop contain}, by observing that they correspond to bounding cycles in a geometrically constrained situation. While it is still true here that for example either $|P_1^{\tau}|\leq 2$ or $|R_2^{\tau}|\leq 2$, for large $k$ it does not constrain the geometry sufficiently to bound $F(P_1,R_2^{\tau},P_3,\dots,P_k)$ easily.

Thus, we will introduce new sub-problems, where we will keep track of containments that occur between certain parts by a weighted auxiliary graph $H$. Notice that we can say more than just $|P_1^{\tau}|\leq 2$ or $|R_2^{\tau}|\leq 2$. It is also true that if $|P_1^\tau|\geq 3$, then $P_1$ is contained in a $2$-sphere, and a similar observation holds for $R_2$.

For a subgraph $H$ of $G$ we say that a $k$-tuple of points $(P_1,\dots,P_k)$ is of \emph{type $H$}, if  for every edge $(i,j)$ of $H$ the distance between every point of $P_i$ and $P_j$ is the one.
We denote by 
\[F^{H}(n_1,\dots,n_k)=\max F(P_1,\dots,P_k),\]
where the maximum is taken over all $k$-tuples $(P_1,\dots,P_k)$ of type $H$ with $|P_i|\leq n_i$.

With this notation, \eqref{ 1st cutting} implies
\begin{multline*}\label{ 1st cutting}F(n_1,\dots,n_k)= \\[3pt] \tilde{O}(r^3)\left (F\left (\frac{n_1}{r^3},\frac{n_2}{r},\frac{n_3}{r},\frac{n_4}{r},n_5,\dots,n_k\right )+F^{H_2}(n_1,\dots,n_k)+F^{H_3}(n_1,\dots,n_k)+F^{H_4}(n_1,\dots,n_k)\right),
\end{multline*}
where $H_i$ is the graph with a single edge $(1,i)$.

Refining the notion further, for a vector $\pmb{\lambda}=(\lambda_1,\dots,\lambda_k)\in \{0,1,2,3\}^k$ we say that a $k$-tuple $(P_1,\dots,P_k)$ is of type $(H,\pmb{\lambda})$,
\begin{itemize}
    \item if $(P_1,\dots,P_k)$ is of type $H$, and
    \item if $\lambda_i\leq 2$, then $P_i$ is contained in a $\lambda_i$-sphere but is not contained in a $\lambda_i-1$-sphere, further
    \item if $\lambda_i=3$, then $P_i$ is not contained in a sphere.
\end{itemize}
We define $F^{H,\pmb{\lambda}}(n_1,\dots,n_k)$ analogously to $F^H(n_1,\dots,n_k)$. Further, we say that a pair $(H,\pmb{\lambda})$ is \emph{realizable}, if there exists a $k$-tuple $(P_1,\dots,P_k)$ of type $(H,\pmb{\lambda})$.

For any subgraph $H$ of $G$, and any vector $\pmb{\lambda}$ by applying the cutting lemma with the $i$-th part playing the role of the points, and with some parameter $r_i$, we obtain
\begin{multline*}F^{H,\pmb{\lambda}}(n_1,\dots, n_k)=\\[3pt] \tilde{O}(r^3)\Big ( F^{H,\pmb{\lambda}}(m_1,\dots,m_k)+ F^{H_1,\pmb{\lambda}_1}(n_1,\dots,n_k)+F^{H_2,\pmb{\lambda}_2}(n_1,\dots,n_k)+F^{H_3,\pmb{\lambda}_3}(n_1,\dots,n_k) \Big),
\end{multline*}
where
\begin{itemize}
    \item $m_i=\frac{n^i}{r^{\lambda_i}}$,
    \item $m_j=\frac{n_j}{r}$ if $(i,j)\in G\setminus H$,
    \item $m_j=n_j$ otherwise,
    \item each $H_i$ is $H$ extended by an edge connecting $i$ with one of its neighbours in $G$, and
    \item $\pmb{\lambda}_i$ is $\pmb{\lambda}$ modified suitably along the new edge.
\end{itemize}

By applying the cutting lemma similarly $k$ times such that in the $i$-th round we have the points of the $i$-th part as the set of points, the spheres centred in the neighbours (in $G$) of $i$ as spheres, and $r_i=r^{x_i}$ as parameter, for any subgraph $H$ we obtain
\begin{equation*}\label{eq: rec}
    F^{H,\pmb{\lambda}}(n_1,\dots,n_k)=\tilde{O}\Big (\prod_i{r^{x_i}}\Big) \Big(F^{H,\pmb{\lambda}}(m_1,\dots,m_1)+\sum_{H'}F^{H'}(n_1,\dots,n_k)\Big)
\end{equation*}
where the sum is taken over all subgraphs $H'$ of $G$ with strictly more edges than $H$, and where 
\[m_i=\frac{n_i}{r^{x_i\lambda_i+\sum_{(i,j)\in G\setminus H}x_j}}.\]

Let $\xi(H,\pmb{\lambda})$ be the solution of the following linear optimization problem:
\begin{align}\label{total}
&\min \sum {x_i\lambda_i} \ \ \ \text{subject to: }\\
&x_i\geq 0 \ \ \ \ \ \ \ \ \ \ \ \ \ \, \ \ \    \textrm { for } 1\leq i \leq k\\
\label{cond}
&\lambda_ix_i+\sum_{(i,j)\in G\setminus H} x_j\geq 1 \ \  \textrm{ for } 1\leq i \leq k
\end{align}

With this, we obtain
\begin{equation}\label{eq: rec2}
    F^{H,\pmb{\lambda}}(n)=\tilde{O}\Big (r^{\xi(H,\pmb{\lambda})}\Big) \Big(F^{H,\pmb{\lambda}}\Big(\frac{n}{r}\Big)+\sum_{H'}F^{H'}(n)\Big ).
\end{equation}

Let $\xi=\max_{H,\pmb{\lambda}}{\xi(H,\pmb{\lambda})}$, where the maximum is taken over all realizable $(H,\pmb{\lambda})$. By induction on the number of edges in $G\setminus H$, and with an appropriate choice of $r$, using \eqref{eq: rec2} one can show that we have $F(n)=\tilde{O}({n^{\xi}})$. Note that the starting case of the induction is $G=H$, for which one can show directly that $F^G(n)=O(n^{\frac{k}{2}})$. Indeed, if $(P_1,\dots,P_k)$ is of type $G$, then $|P_i|\leq 2$ for at least $\frac{k}{2}$ indices $i$. 

Thus, it is sufficient to show that the solution of the linear optimization problem for any realizable pair $(H,\pmb{\lambda})$ is at most $\frac{k}{2}$.

\bigskip

We make some geometric observations about realizable pairs $(H,\pmb{\lambda})$: 
\begin{enumerate}[label=(\roman*)]
    \item If $\lambda_i=3$, then $i$ is an isolated vertex of $H$. \label{lambda 3}
    \item If $\lambda_i=2$, then $i$ has exactly one neighbour $j$ in $H$, for which we must have $\lambda_j=0$. (As $P_j$ must be in the center of a $2$-sphere) \label{lambda 2}
    \item If $\lambda_i=1$, then for any neighbour $j$ of $i$ in $H$ we must have $\lambda_j=0$ (As there are at most two points at a given distance from all points of a circle.) \label{lambda 1}
    \item If $i$ is an isolated vertex in $H$, then we may assume that for every neighbour $j$ of $i$ in $G$ we have $\lambda_j\geq 1$. \label{isolated}
    \item We may assume that if $\lambda_i=0$, then $\deg_H(i)=3$. (Indeed, if $\lambda_i=0$, and $(P_1,\dots,P_n)$ is of type $(H,\pmb{\lambda})$, then $|P_i|\leq 2$. Since we only want to bound $F^{H,\pmb{\lambda}}(n)$ up to the order of magnitude, we may assume that $|P_i|=1$. Then if $(i,j)\in G$, we may discard those $p_j\in P_j$ that are not at distance one from the single point of $P_i$, as they cannot be part of any copy of $G$.) \label{lambda 0}
\end{enumerate}.

Let us define $\mathbf{x}=(x_1,\dots,x_k)$ as

\begin{equation*}
x_i=
\begin{cases*}
\frac{1}{2\lambda_i} & if $\lambda_i\neq  0$ and $\textrm{deg}_H(i)=0$ \\[4pt]
\frac{1-\frac{1}{6}(3-\deg_H(i))}{\lambda_i} & if  $\lambda_i\neq 0$ and $\textrm{deg}_H(i)\neq 0$\\[4pt]
 0 & if $\lambda_i=0$.
\end{cases*}
\end{equation*}

We will show that this $\mathbf{x}$ satisfies the constraints in \eqref{cond} in the linear optimisation problem, and gives optimal-function value $\frac{k}{2}$ in \eqref{total}. This will finish the proof.

To check \eqref{cond} we point out that each vertex $i$ gets a contribution from those vertices $j$ that are neighbours if $i$ in $G$  but not in $H$. Notice that $x_j\geq \frac{1}{6}$ if $\lambda_j\neq 0$. This, together with \ref{lambda 0} implies that
{\[\lambda_ix_i+\sum_{(i,j)\in G\setminus H} x_j\geq 1-\frac{1}{6}(3-\deg_H(i))+(3-\deg_H(i))\frac{1}{6}=1.\]}

To finish, we show that $\displaystyle{\Sigma_{i=1}^k} \lambda_i x_i \leq k/2$. Let
\begin{align*}
k_0 &= |\setbuilder{i}{\deg_H(i)=0}|, \\
k_1 &= |\setbuilder{i}{\deg_H(i)=1, \lambda_i=2}|, \\[2pt]
k_2 & = |\setbuilder{i}{\deg_H(i)=3, \lambda_i=1}|, \\[2pt]
k_3 &= |\setbuilder{i}{\deg_H(i)=1, \lambda_i=1}|,\\[2pt]
k_4 &= |\setbuilder{i}{\deg_H(i)=2, \lambda_i=1}|, \\
k_5 &= |\setbuilder{i}{\lambda_i=0}|.
\end{align*}

Then \ref{lambda 2}, \ref{lambda 1} and \ref{lambda 0} together with a double counting implies that
\begin{equation*}
   3k_5 =k_1+3k_2+k_3+ 2k_4,
\end{equation*}
which gives that
\begin{equation*}
    k=k_0+k_1+k_2+k_3+k_4+k_5=k_0+ \frac{4}{3}k_1+2k_2+\frac{4}{3}k_3+\frac{5}{3}k_4.
\end{equation*}

From this, and the definition of $\mathbf{x}$ it follows that
\begin{equation*}
    \sum_{i=1}^k\lambda_ix_i=\frac{1}{2}k_0+\frac{1}{3}k_1+k_2+\frac{2}{3}k_3+\frac{5}{6}k_4 \leq \frac{k}{2}.
\end{equation*}
\end{proof}

We close this section by describing constructions for bipartite $G$ that match the upper bound in the slightly modified setting, when we count the number of copies of $G$ with prescribed edge lengths. We choose $\frac{k}{2}$ points on a line $\ell$, Then, we fix a circle $C$ in a plane orthogonal to $\ell$, and centred in a point of $\ell$, and place $n-\frac{k}{2}$ points on it. Now it is easy to check that by picking the $\frac{k}{2}$ points from $\ell$, and any $\frac{k}{2}$ points from $C$, we obtain a copy of $G$.

\section{Concluding remarks and further problems}

While we could find sharp bounds for unit distance $k$-cycles for most $k$, on $\mathbb{R}^2$ and in $\mathbb{R}^3$ the problem seems more difficult. Proving good bounds for short cycles would be particularly interesting. In the plane, for $k=3$ an easy upper bound is $u_2(n)$, and the best lower bound is $ne^{\Omega(\frac{\log n}{\log \log n})}$. (See discussion in Chapter 6 of \cite{RPDG}.) For $k=4$ in the plane, we can construct $\Omega(u_2(n))$ many $4$-cycles by using two translated copies of a set with optimally many unit distances, and the best upper bound we could prove is $\tilde{O}(n^{\frac{5}{3}})$.

For $3$-regular graphs in $\mathbb{R}^3$, our bounds are sharp for bipartite graphs for the modified setting. It would be interesting to find sharp bounds in the general case, or at least for some small non-bipartite $3$-regular graphs.

\bibliographystyle{amsplain}
\bibliography{ref}

\end{document}